\newtheorem{theorem}[subsection]{Theorem}
\newtheorem{lemma}[subsection]{Lemma}
\newtheorem{proposition}[subsection]{Proposition}
\newtheorem{corollary}[subsection]{Corollary}
\newenvironment{remark}{\medskip \refstepcounter{subsection}
\noindent  {\bf Remark \thetheorem}.\rm}{\,}
\newenvironment{definition}{\medskip \refstepcounter{subsection}
\noindent  {\bf Definition \thetheorem}.\rm}{\,}
\newtheorem{theointro}{Theorem}
\newtheorem{propintro}[theointro]{Proposition}
\newtheorem{rmkintro}[theointro]{Remark}
\def\Aut{\mathrm{Aut}}
\def\cF{\mathcal{F}}
\def\cO{\mathcal{O}}
\def\cH{\mathcal{H}}
\def\cB{\mathcal{B}}
\def\cL{\mathcal{L}}
\def\cX{\mathcal{X}}
\def\del{\partial}
\def\delb{\overline\partial}
\def\h{\mathfrak{h}}
\def\g{\mathfrak{g}}
\def\k{\mathfrak{k}}
\def\R{\mathbb{R}}
\def\N{\mathbb{N}}
\def\T{\mathbb{T}}
\def\P{\mathbb{P}}
\def\C{\mathbb{C}}
\def\uS{\underline{S}}
\def\om{\omega}
\def\ep{\varepsilon}
\def\>{\rangle}
\def\<{\langle}
\def\>{\rangle}
\begin{document}

\title[Lower bounds on the modified K-Energy and complex deformations]
{Lower bounds on the modified K-Energy and complex deformations}
\author[A. Clarke
]{Andrew Clarke}
\address{Instituto de Matem\'atica, Universidade Federal do Rio de Janeiro,
Av. Athos da Silveira Ramos 149,
Rio de Janeiro, RJ, 21941-909,
Brazil
.}
\email{andrew@im.ufrj.br}
\author[C. Tipler]{Carl Tipler}
\address{D\'epartement de math\'ematiques
Universit\'e du Qu\'ebec  Montr\'eal
Case postale 8888, succursale centre-ville
Montr\'eal (Qu\'ebec) H3C 3P8}
\email{carl.tipler@cirget.ca}

\date{\today}

\begin{abstract}
Let $(X,L)$ be a polarized K\"ahler manifold that admits an extremal metric in $c_1(L)$. We show that on a nearby polarized deformation $(X',L')$ that preserves the symmetry induced by the extremal vector field of $(X,L)$, the modified K-energy is bounded from below.
This generalizes a result of Chen, Sz\'ekelyhidi and Tosatti (\cite{ch2,sz10,to}) to extremal metrics. Our proof also extends a convexity inequality on the space of K\"ahler potentials due to X.X. Chen \cite{ch} to the extremal metric setup.
As an application, we compute explicit polarized $4$-points blow-ups of $\C\P^1\times\C\P^1$ that carry no extremal metric but with modified K-energy bounded from below.
\end{abstract}

\maketitle

\section{Introduction}
\label{secintro}

Let $(X,L)$ be a polarized K\"ahler manifold. The Yau-Tian-Donaldson conjecture relates the existence of a constant scalar curvature K\"ahler metric  (CSCK metric) with K\"ahler class $c_1(L)$ to the GIT stability of the pair $(X,L)$, (see \cite{yau,tian,Don02}). This conjecture is motivated by the ``standard picture" \cite{Don97}, and in this framework the CSCK metrics are the critical points of the Mabuchi functional, or K-energy, introduced by Mabuchi \cite{ma}.  Donaldson has shown that if $\Aut(X,L)$ is discrete, then a CSCK metric in $c_1(L)$ is the limit of balanced metrics, which implies the uniqueness of the CSCK metric in its K\"ahler class \cite{Don01}, and the minimization of the Mabuchi functional \cite{Don05} by this metric. Chen \cite{ch}, and later Chen and Sun \cite{cs} gave new proofs of the minimization property with no assumption on the automorphism group. An even simpler proof is due to Li \cite{li}.

\par
Constant scalar curvature K\"ahler metrics give examples of the extremal metrics of Calabi \cite{c1}. An extremal metric is a critical point of the functional that assigns to each K\"ahler metric in the K\"ahler class the square of the $L^2$ norm of its scalar curvature, with respect to the volume form that also comes from the metric. An important property of extremal metrics is that the connected component of the identity of the isometry group is a maximal compact connected subgroup of the reduced automorphism group of the manifold \cite{c2}. Studying extremal metrics requires us to work modulo such a maximal compact connected group $G^m\subset\Aut(X,L)$. For example, Szek\'elyhidi \cite{sz} gave a relative version of the above conjecture for extremal metrics. Extremal metrics can also be seen to be critical points for a \emph{modified} $K$-energy $E^{G^m}$, as introduced in \cite{gu,si,ct}. 
The uniqueness result of Donaldson using quantization has been generalized to extremal metrics by Mabuchi \cite{ma04}. In \cite{st}, it is shown, also by quantization, that in the polarized case extremal metrics are minima of the modified K-energy. Chen and Tian show with no polarization assumption that extremal metrics are unique in a K\"ahler class up to automorphisms and minimize the modified K-energy \cite{ct}.

\par
The aim of this paper is to study the lower boundedness property of the modified K-energy under complex deformation (see Definition~\ref{def:deformation}). Let $(X,L)$ be an extremal polarized K\"ahler manifold, i.e.  assume that $X$ carries an extremal K\"ahler metric in the class $c_1(L)$. Then the existence
 of an extremal K\"ahler metric on a nearby $(X',L')$ is subject to a finite dimensional stability condition, see \cite{sz10}, \cite{br} or \cite{rt}.
However, in the constant scalar curvature case, it follows from theorems of Sz\'ekelyhidi and Chen \cite{sz,ch2} that the K-energy remains bounded on nearby deformations. A simplified proof of the theorem of Chen was given by Tosatti \cite{to}.
   In the polarized case, by a theorem of Futaki and Mabuchi  \cite{fm}, an extremal metric admits an $S^1$-action by isometries induced by the extremal vector field. If one wants to smoothly deform an extremal metric along a complex deformation, a necessary condition is to deform the action corresponding to the extremal vector field along the fibers of the deformation. Our main result states that this condition is enough to ensure the lower boundedness of the modified K-energy on complex deformations of extremal polarized manifolds. 
 \par
To state our main theorem, we need to introduce some notation and terminology. For a polarized complex deformation $\cL \rightarrow \cX \stackrel{\pi}{\rightarrow} \mathcal{B}$ of some complex manifold $X=X_{t_0}=\pi^{-1}(t_0)$ for $t_0,t\in \mathcal{B}$, we denote by $J_t$ the almost-complex structure of the complex manifold $X_t$ and we denote by $V_t^{G^m_t}$ the extremal vector field of $(X_t,c_1(L_t))$ with respect to a maximal compact connected subgroup $G^m_t\subset\Aut(X_t,L_t)$. Given a group $G$, we will say that the polarized complex deformation is {\it $G$-invariant} if $G$ acts on the triple $(\cB,\cX,\cL)$, its action commutes with the maps $\cL \rightarrow \cX \rightarrow \mathcal{B}$ and is trivial on $\cB$. That is, it induces a $G$-action on each fiber of the deformation. If moreover $G$ is a Lie group, we will identify its Lie algebra with vector fields on each fiber of the deformation using the infinitesimal action. Note however that this identification depends a priori on the fiber. We 
can state our main result :

\begin{theointro}
\label{theo:defo}
Let $(X,L)=(X_{t_0},L_{t_0})$ be a polarized extremal K\"ahler manifold and $G_{t_0}^m$ be a maximal compact connected subgroup of $\Aut(X_{t_0},L_{t_0})$.
Let $\cL \rightarrow \cX \rightarrow \mathcal{B}$ be a polarized $G$-invariant deformation of $(X_{t_0},L_{t_0})$ with $G$ a compact connected subgroup of $G_{t_0}^m$.
Assume :
\begin{itemize}
\item[(1)]  $Lie(G)$ contains $J_{t_0}V_{t_0}^{G^m_{t_0}}$
\item[(2)] for some $t$ sufficiently close  to $t_0$ in $\mathcal{B}$, $Lie(G)$ contains $J_tV_t^{G^m_t}$,  with $G_t^m$ a maximal compact connected subgroup of $\Aut(X_t,L_t)$ such that $G\subset G_t^m \subset \Aut(X_t,L_t)$.
\end{itemize}
Then the modified K-energy  $E^{G_t^m}$  of $(X_t,L_t)$ is bounded from below.
\end{theointro}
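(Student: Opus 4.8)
The plan is to adapt the strategy by which Tosatti \cite{to} streamlined Chen's theorem \cite{ch2} to the equivariant framework of the modified K-energy, with the convexity of $E^{G^m}$ along weak geodesics in the space of $G$-invariant Kähler potentials replacing the convexity of the ordinary K-energy used in the constant scalar curvature case. First I would fix a smooth trivialization of the deformation $\cX\to\cB$ identifying every nearby fiber $X_t$, as a smooth manifold with its $G$-action, with the central fiber $X_{t_0}$; what then varies is only the complex structure $J_t$, the polarization $L_t$, and the induced family of Kähler forms, so the spaces of $G$-invariant Kähler potentials on the various fibers can be compared. On the central fiber, the existence of an extremal metric together with the theorem of Chen and Tian \cite{ct} shows that this metric minimizes $E^{G^m_{t_0}}$, so $E^{G^m_{t_0}}$ is bounded from below on $X_{t_0}$; this is the anchor of the argument.

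The heart of the matter, and the content promised in the abstract, is to extend Chen's convexity inequality \cite{ch} to the modified K-energy: along a weak $C^{1,1}$ geodesic $\phi_s$, $s\in[0,1]$, joining a reference potential $\phi^t_0$ to an arbitrary $G$-invariant potential $\phi$, the function $s\mapsto E^{G^m_t}(\phi_s)$ is convex, so that
\[
E^{G^m_t}(\phi)\;\geq\; E^{G^m_t}(\phi^t_0)+\frac{d}{ds}\Big|_{s=0^+}E^{G^m_t}(\phi_s),
\]
and the derivative on the right is governed by the modified scalar curvature of the reference $\phi^t_0$. Conditions (1) and (2) enter exactly here: they force $J_{t_0}V_{t_0}^{G^m_{t_0}}$ and $J_tV_t^{G^m_t}$ to lie in $Lie(G)$, so that the extremal vector fields, and with them the \emph{modified} scalar curvatures entering $E^{G^m_{t_0}}$ and $E^{G^m_t}$, depend continuously on $t$ and can be matched along the family; without this the potential of the extremal vector field would jump across fibers and the comparison would be meaningless.

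I would then take as reference $\phi^t_0$ the image under the trivialization of the extremal metric of $X_{t_0}$. Since this metric is genuinely extremal on $X_{t_0}$, its modified scalar curvature measured on $X_t$ is $C^0$-close to constant when $t$ is near $t_0$; feeding this smallness into the extended convexity inequality bounds $E^{G^m_t}$ from below, with all the constants controlled continuously in $t$ by conditions (1) and (2). A closeness estimate in $t$ then makes the bound effective for every $t$ in a neighborhood of $t_0$, which is the assertion of the theorem.

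The step I expect to be the main obstacle is establishing the extremal analogue of Chen's convexity inequality together with the $C^{1,1}$ regularity of the geodesics within the $G$-invariant class. Already for the ordinary K-energy, convexity along merely $C^{1,1}$ geodesics is delicate; here one must additionally carry the term coming from the extremal vector field through the whole computation and verify that $G$-invariance is preserved along the geodesic. The subtlest point is to bound the initial derivative of $E^{G^m_t}$ at the near-extremal reference uniformly in the far endpoint $\phi$, so that the resulting lower bound does not degenerate; this is precisely where the near-extremality of the reference and the continuity furnished by conditions (1) and (2) must be pushed hardest.
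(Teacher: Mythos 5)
Your proposal has a genuine gap at its core step, and it is exactly the step you flag as ``the subtlest point.'' With a \emph{single} reference potential $\phi^t_0$ (the pulled-back extremal metric of $X_{t_0}$, which on $X_t$ is only \emph{approximately} extremal), any Chen-type estimate — whether phrased as convexity along a geodesic or as the inequality $E^{G^m_t}(\phi)\geq E^{G^m_t}(\phi^t_0)-d(\phi^t_0,\phi)\sqrt{Ca^{G^m_t}(\phi^t_0)}$ — cannot yield a uniform lower bound: the reduced Calabi energy of $\phi^t_0$ is a fixed \emph{positive} number $\epsilon_t$ (it cannot be made zero, since $X_t$ may admit no extremal metric at all, which is precisely the interesting case, cf. the $4$-point blow-ups of $\C\P^1\times\C\P^1$ in Section~\ref{sec:ap}), while the geodesic distance $d(\phi^t_0,\phi)$ is unbounded as $\phi$ ranges over all $G$-invariant potentials. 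The error term $d\cdot\sqrt{\epsilon_t}$ therefore degenerates, and no amount of ``pushing the continuity in $t$ harder'' can fix this; it is a structural failure, not a technical one. What is needed, and what the paper supplies as its main geometric input (Proposition~\ref{prop:test}, proved in Section~\ref{sec:test} via the equivariant Kuranishi slice, the moment-map picture for $S^G$, and GIT polystability in $H^1_G$), is a $G$-invariant \emph{test configuration} degenerating $(X_t,L_t)$ to an \emph{extremal} central fiber. The $\C^*$-action then produces a ray of reference potentials $\varphi_s$ on $X_t$ along which $Ca^G(\varphi_s)$ decays \emph{exponentially} (the limit metric is genuinely extremal) while $d(\varphi,\varphi_s)$ grows at most \emph{linearly} in $s$, and along which $E^G(\varphi_s)$ stays bounded below because its derivative converges exponentially to minus the relative Futaki invariant of the central fiber, which vanishes. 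Only then does the product $d\cdot\sqrt{Ca^G}$ tend to zero and the inequality of Proposition~\ref{prop:convex} close the argument.

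Two further discrepancies are worth noting. First, your anchor is the wrong manifold: boundedness of $E^{G^m_{t_0}}$ on $X_{t_0}$ (Chen--Tian) does not transfer to $X_t$ through a smooth trivialization, since the modified K-energy depends on the complex structure; in the paper the anchor is instead the extremal central fiber of the test configuration, which is a degeneration of $X_t$, generally distinct from $X_{t_0}$. Second, the paper does not prove the extremal version of Chen's inequality via $C^{1,1}$ geodesics (convexity of the K-energy along weak geodesics was not available); it proves Proposition~\ref{prop:convex} by quantization, using the $G$-invariant spaces $\cB_k^G$, the functionals $Z^\sigma_k$ of Sano--Tipler built from the one-parameter groups generated by $-V^{G^m}/4k$, their convexity along geodesics of $\cB_k^G$, and the Chen--Sun convergence of $(\cB_k,d_k)$ to $(\cH,d)$. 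Also, hypotheses (1) and (2) are not mere ``continuity of the extremal field in $t$'': (1) is what allows the slice construction to be run relative to $G$ so that the central fiber of the test configuration is extremal with extremal field in $\mathfrak{g}$, while (2) enters only at the very end, via Remark~\ref{rmk:mab}, to upgrade the lower bound on $E^{G}$ to the asserted bound on $E^{G^m_t}$.
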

\noindent This generalizes the result of Sz\'ekelyhidi, Chen and Tosatti for constant scalar curvature metrics to the extremal case.

\begin{rmkintro}
Note that hypothesis $(2)$ of Theorem \ref{theo:defo} is trivially satisfied if  $G$ is a maximal compact connected subgroup of $\Aut(X_t,L_t)$.
\end{rmkintro}

\begin{rmkintro}
If we assume that the deformation preserves a maximal compact subgroup of $\Aut(X,L)$, then the nearby fibers are automatically extremal, see \cite{acgt} or \cite{rst}. However if the deformation preserves a strictly smaller group with the hypothesis of Theorem \ref{theo:defo}, then a nearby fiber to $(X_{t_0},L_{t_0})$ must satisfy a stability condition to admit an extremal metric, see \cite{sz10}, \cite{br} and \cite{rt}. Nevertheless, by Theorem \ref{theo:defo}, the modified K-energy is bounded from below, even if the deformed manifold does not carry an extremal metric.  In Section~\ref{sec:ap}, we compute explicit examples of polarized $4$-points blow-ups of $\C\P^1\times\C\P^1$ that carry no extremal metrics, but with modified K-energy bounded from below.
\end{rmkintro}

The proof of Theorem \ref{theo:defo} follows the general lines of Tosatti's proof in the CSCK case. However some important technical points need to be generalized to the extremal setting. As these results are of independent interest, we state them below.
Let $(X,L)$ be a polarized extremal K\"ahler manifold and $G^m$ be a maximal compact connected subgroup of $\Aut(X,L)$.
Let $\cL \rightarrow \cX \stackrel{\pi}{\rightarrow} \mathcal{B}$ be a polarized $G$-invariant deformation of $(X,L)=(X_{t_0},L_{t_0})$ with $G$ a compact connected subgroup of $G^m$. Here we write $X_{t_0}=\pi^{-1}(t_0)$ and $L_{t_0}=\mathcal{L}|_{X_{t_0}}$, for $t_0\in\mathcal{B}$.
First, under the hypotheses of Theorem \ref{theo:defo}, we can build a special test configuration to simplify the problem (see Definition~\ref{def:testconf}). We extract from \cite{rt} the following result, which is due to Sz\'ekelyhidi in the CSCK case \cite{sz10}:
\begin{propintro}
\label{prop:test}
In the above situation, assume that hypothesis $(1)$ of Theorem \ref{theo:defo} is satisfied.
Then, for any $t\in\mathcal{B}$ sufficiently close to $t_0$ there is a smooth test configuration $\cL_T \rightarrow \cX_T \rightarrow \C$ with generic fibre $(X_t,L_t)$  satisfying :
\begin{itemize}
\item[(1)] the central fiber of the test configuration is a polarized extremal K\"ahler manifold $(\cX_{0},\cL_{0})$,
\item[(2)] the test configuration is $G$-invariant,
\item[(3)] ${J}_0V_0^{G_0^m}$ is contained in $Lie(G)$ where ${G}_0^m$ is a maximal compact connected subgroup of $Aut({\cX}_{0},{\cL}_{0})$ containing $G$.
\end{itemize}
\end{propintro}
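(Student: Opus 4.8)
The plan is to realize the nearby fibres inside a $G$-equivariant versal deformation of $(X_{t_0},L_{t_0})$ and then to produce the test configuration as the closure of a single $\C^*$-orbit, where $\C^*$ is the complexification of the one-parameter group generated by the extremal field. First I would invoke the equivariant Kuranishi slice theorem: since $G$ is compact one can average to obtain a $G$-equivariant versal family $\cL_K\to\cX_K\to B$ with $B\subset H^1(X_{t_0},\Theta)$ a locally closed $G$-invariant analytic germ, central fibre $(X_{t_0},L_{t_0})$, and a classifying map $\kappa:\cB\to B$ with $X_t\cong X_{\kappa(t)}$, equivariant because the given deformation is $G$-invariant. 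The extremal field is invariant under the whole isometry group and hence central in $G^m_{t_0}$; by hypothesis $(1)$ the circle it generates lies in $G$, and its complexification defines a holomorphic $\C^*$-action on $B$ fixing the origin and acting on $H^1$ to first order with integer weights.

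Next I would lift this $\C^*$ to the total space $\cX_K$ and, after replacing $L$ by a power if necessary, to the polarization $\cL_K$, using the Hamiltonian of the extremal field as the linearization; this is where the polarized hypothesis enters. Writing $s=\kappa(t)=\sum_w s_w$ for the weight decomposition, the map $\lambda\mapsto\lambda\cdot s$ extends across $\lambda=0$ once one checks that the limit $s_\infty=\lim_{\lambda\to 0}\lambda\cdot s$ exists in $B$ (guaranteed for $t$ close to $t_0$ by the weight structure on $H^1$, after orienting $\C^*$ so the occurring weights are non-negative), and it stays in $B$ because $B$ is cut out by $\C^*$-equivariant equations. Pulling the family back along $\C\to B$, $\lambda\mapsto\lambda\cdot s$, gives a polarized family $\cL_T\to\cX_T\to\C$. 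Since the $\C^*$ lifts to $\cX_K$, the fibres over $\C^*$ are all biholomorphic to $X_t$, so this is a genuine test configuration with generic fibre $(X_t,L_t)$ and central fibre $(\cX_0,\cL_0)=(X_{s_\infty},L_{s_\infty})$. Because the extremal field is central, $G$ commutes with this $\C^*$ and acts on the whole configuration, which yields property $(2)$.

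The crux is property $(1)$: showing that $(\cX_0,\cL_0)$ is extremal. The limit point $s_\infty$ is fixed by the $\C^*$, so $\cX_0$ carries the extra symmetry of this torus in addition to $G$; I would argue that $s_\infty$ is a small deformation of $X_{t_0}$ preserving a maximal compact subgroup of $\Aut(X_{t_0},L_{t_0})$ — the torus generated by $JV_{t_0}^{G^m_{t_0}}$ together with the symmetry forced by the fixed-point condition — and hence is extremal by the openness result for symmetric deformations recalled in the Remark after Theorem~\ref{theo:defo} (\cite{acgt},\cite{rst}). This is the step I expect to be the main obstacle, since one must verify that the enhanced symmetry of the limit is genuinely maximal (or at least large enough to force extremality) and that the relevant Kähler class is preserved under the degeneration. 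For property $(3)$, the extremal field of $(\cX_0,\cL_0)$ relative to a maximal compact $G_0^m\supset G$ is determined by its maximal torus through the Futaki--Mabuchi characterization \cite{fm}; since the degeneration is built from the extremal $\C^*\subset G^{\C}$ and $G$ is preserved throughout, $J_0V_0^{G_0^m}$ is forced to lie in $Lie(G)$.

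In effect this reconstructs, in the equivariant and extremal setting, Sz\'ekelyhidi's CSCK construction \cite{sz10} as carried out in \cite{rt}; the only genuinely new points over the CSCK case are the bookkeeping of the extremal field as a central element of $G^m_{t_0}$ and the use of the symmetric-deformation result to guarantee extremality of the degenerate fibre rather than merely constant scalar curvature.
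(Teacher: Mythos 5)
There is a fatal gap: your degenerating one-parameter subgroup is the wrong one, and with it the whole construction collapses to the trivial test configuration. Under hypothesis $(1)$ the circle generated by $J_{t_0}V_{t_0}^{G^m_{t_0}}$ lies in $G$; and since the given deformation is $G$-invariant, your own classifying map $\kappa$ is $G$-equivariant with $G$ acting trivially on $\cB$, so $s=\kappa(t)$ lands in the $G$-fixed locus $B^G$. In particular $s$ is fixed by the extremal circle, hence by its complexification, so the weight decomposition $s=\sum_w s_w$ is concentrated in weight zero, $\lambda\cdot s=s$ for all $\lambda$, and $s_\infty=s$. Your test configuration is then the product $X_t\times\C$ with central fibre $X_t$ itself, which in general carries no extremal metric; this is precisely the situation the proposition is designed to handle, since in Section~\ref{sec:ap} the fibres obtained from unstable directions in $H^1(X,\Theta_X)^{\T_f}$ admit no extremal metric, yet the proposition degenerates them nontrivially to an extremal central fibre. (Relatedly, your claim that one can ``orient $\C^*$ so the occurring weights are non-negative'' is not available for a prescribed one-parameter subgroup: weights of both signs can occur, and the existence of a limit as $\lambda\to 0$ is a genuine GIT statement, not a normalization.)

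The correct one-parameter subgroup does not come from the extremal field at all. The paper, following \cite{sz10,rt}, works symplectically: fix $\omega$, regard the deformation as a family in $\cJ^G$, and take the finite-dimensional slice $\Phi:B\subset H^1_G\to\cJ^G$ of Proposition~\ref{prop:slice}, transverse to the complexified orbit of the extremal structure $J_0$, on which the compact stabilizer $K$ of $J_0$ acts with moment map $\mu=\Phi^*S^G$. This $K$ is a group of automorphisms of the \emph{central} fibre, generally much larger than the extremal circle, and it acts nontrivially on $G$-invariant deformation classes (in Section~\ref{sec:ap} it is $\T_a$, acting on $H^1(X,\Theta_X)^{\T_f}\cong\C^2$ with weights $(-1,1)$). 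If the point $x$ representing $J_t$ is $K^\C$-polystable one takes the trivial configuration; if not, GIT supplies a destabilizing $\rho:\C^*\to K^\C$ with $\lim_{\lambda\to 0}\rho(\lambda)\cdot x=x_0$ polystable, and extremality of the central fibre follows from Proposition~\ref{prop:polyst} (a zero of the moment map in the polystable orbit), not from the openness results of \cite{acgt,rst}, which would in any case require the maximality of the preserved symmetry that you only conjecture. Property $(3)$ also falls out of this moment-map picture: $S^G=0$ on the central fibre says exactly that the projection of the scalar curvature is a Killing potential of $\g$, i.e.\ $J_0V_0^{G_0^m}\in Lie(G)$; an appeal to Futaki--Mabuchi ``forcing'' this is not an argument. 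The remaining ingredients of your outline (equivariant Kuranishi-type family, lifting the one-parameter subgroup to act on the polarization) do correspond to genuine steps of the paper's proof (Lemma~\ref{lem:action}, Corollary~\ref{cor:1ps}), but they cannot rescue the construction once the degeneration itself is trivial.
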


To control the Mabuchi energy, we show a convexity inequality on the space of K\"ahler potentials.
Let $\cH$ be the space of K\"ahler potentials of the class $c_1(L)$ with respect to a fixed $G$-invariant metric $\omega\in c_1(L)$:
\begin{eqnarray*}
\cH= \lbrace \phi \in C^{\infty} (X) \vert \om_{\phi}:=\om + \sqrt{-1}\del\delb \phi > 0 \rbrace.
\end{eqnarray*}
Denote  $\cH^G$ the space of invariant potentials under the $G$-action. Then the Calabi functional and Mabuchi functional admit $G$-invariant relative versions $Ca^G$ and $E^G$ on $\cH^G$  (see Section \ref{sec:modified}). Then the following is a generalization of Chen's inequality \cite{ch}:
\begin{propintro}
\label{prop:convex} 
For any $\phi_0$, $\phi_1$ in $\cH^G$, we have
\begin{eqnarray*}
E^G(\phi_1)-E^G(\phi_0)\leq d(\phi_0,\phi_1) \cdot \sqrt{Ca^G(\phi_1)}.
\end{eqnarray*}
\end{propintro}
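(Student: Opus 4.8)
The plan is to adapt the mechanism behind Chen's inequality: the modified functional $E^G$ is (approximately) convex along geodesics of the Mabuchi metric on $\cH^G$, while $Ca^G$ is exactly the squared norm of the relative gradient of $E^G$, so that the asserted bound drops out of convexity combined with Cauchy--Schwarz and the constant-speed property of geodesics. Concretely, I would first recall from Section~\ref{sec:modified} the first variation formula
\[
\frac{d}{dt}E^G(\phi_t) = -\int_X \dot\phi_t\,\Pi^\perp_{\phi_t}\!\big(S(\om_{\phi_t})\big)\,\frac{\om_{\phi_t}^n}{n!},
\]
where $\Pi^\perp_{\phi_t}$ is the $L^2(\om_{\phi_t})$-orthogonal projection onto the complement of the Killing potentials of $G$ (so $\Pi^\perp_{\phi_t}(S)$ is the reduced scalar curvature, with the average and the extremal potential projected out). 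This identifies $\grad E^G(\phi)=-\Pi^\perp_\phi(S(\om_\phi))$ and yields $Ca^G(\phi)=\|\grad E^G(\phi)\|^2_{\phi}$ immediately from the definition of the relative Calabi functional.

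Since a genuine smooth geodesic joining $\phi_0$ and $\phi_1$ need not exist, I would connect the endpoints by a family of smooth $\ep$-geodesics $\phi^\ep_t$, $t\in[0,1]$, solving the perturbed degenerate Monge--Amp\`ere equation with the two fixed boundary potentials. As $\phi_0,\phi_1\in\cH^G$ and the equation is $G$-equivariant, uniqueness forces each $\phi^\ep_t$ to be $G$-invariant, so all relative quantities are well defined along the path. Differentiating $E^G$ twice and using the relative analogue of the classical second-variation formula, I expect
\[
\frac{d^2}{dt^2}E^G(\phi^\ep_t)=\int_X \big|\cD_{\phi^\ep_t}\dot\phi^\ep_t\big|^2\,\frac{\om_{\phi^\ep_t}^n}{n!} - \int_X \big(\ddot\phi^\ep_t-|\del\dot\phi^\ep_t|^2_{\phi^\ep_t}\big)\,\Pi^\perp_{\phi^\ep_t}\!\big(S(\om_{\phi^\ep_t})\big)\,\frac{\om_{\phi^\ep_t}^n}{n!},
\]
with $\cD$ the Lichnerowicz operator. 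Along the $\ep$-geodesic the second integrand is $O(\ep)$ and the first is nonnegative, giving approximate convexity $\tfrac{d^2}{dt^2}E^G(\phi^\ep_t)\geq -C\ep$.

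Granting this, the function $e_\ep(t)=E^G(\phi^\ep_t)$ satisfies $e_\ep(1)-e_\ep(0)\leq e_\ep'(1)+C\ep$. Evaluating the first variation at $t=1$, where $\om_{\phi^\ep_1}=\om_{\phi_1}$ is fixed and smooth, Cauchy--Schwarz gives
\[
e_\ep'(1) = -\int_X \dot\phi^\ep_1\,\Pi^\perp_{\phi_1}\!\big(S(\om_{\phi_1})\big)\,\frac{\om_{\phi_1}^n}{n!}\ \leq\ \|\dot\phi^\ep_1\|_{\phi_1}\cdot\sqrt{Ca^G(\phi_1)}.
\]
Because $e_\ep(0)=E^G(\phi_0)$ and $e_\ep(1)=E^G(\phi_1)$ are independent of $\ep$, I would finally invoke the a priori estimates for $\ep$-geodesics to pass to the limit $\ep\to0$: the endpoint speed $\|\dot\phi^\ep_1\|_{\phi_1}$ converges to the Mabuchi geodesic distance $d(\phi_0,\phi_1)$ and the $C\ep$ error vanishes, producing the claimed inequality.

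The main obstacle is the approximate convexity in the relative setting. Unlike the classical case, the integrand $\Pi^\perp_\phi(S(\om_\phi))$ depends on $\phi$ through the metric-dependent projection onto Killing potentials, so differentiating produces additional terms coming from the variation of $\Pi^\perp_\phi$ and of the extremal potential. I expect these to be absorbed by replacing $\cD$ with its $G$-relative version and using that the component of $\dot\phi$ along the Killing potentials of $G$ contributes trivially; controlling these terms uniformly in $\ep$, together with the requisite a priori estimates along $\ep$-geodesics carried over from Chen's work \cite{ch}, is the technical heart of the argument.
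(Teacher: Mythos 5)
Your strategy is genuinely different from the paper's. The paper deliberately avoids the infinite-dimensional PDE route: it quantizes, using the exact convexity of the functionals $Z_k^{\sigma}$ along geodesics of the finite-dimensional symmetric spaces $\cB_k^G$ (Proposition~\ref{prop:Zk}, from \cite{st}), the Chen--Sun convergence of $(\cB_k,d_k)$ to $(\cH,d)$ (Theorem~\ref{theo:CS}), and the Bergman-kernel computation of Lemma~\ref{lem:gradZ}, which identifies $\lim_{k\to\infty} k^{-n+2}\|\nabla Z_k^{\sigma}(H_k)\|^2=\frac{1}{4}Ca^G(\phi)$; the claimed inequality is then the $k\to\infty$ limit of the finite-dimensional convexity inequality, where smooth geodesics always exist. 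Your proposal instead follows Chen's original route \cite{ch} through $\ep$-geodesics. The skeleton of your argument (first variation, Cauchy--Schwarz at the endpoint, endpoint speed converging to $d(\phi_0,\phi_1)$, $G$-invariance of the $\ep$-geodesics by uniqueness) is the right skeleton for that route.

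However, there is a genuine gap in the step you call approximate convexity, and it is not absorbable bookkeeping. Along the $\ep$-geodesic one has $(\ddot\phi^\ep_t-|\partial\dot\phi^\ep_t|^2_{\phi^\ep_t})\,\om_{\phi^\ep_t}^n=\ep\,\om^n$, so the error term in your second-variation formula equals
\[
-\int_X \bigl(\ddot\phi^\ep_t-|\partial\dot\phi^\ep_t|^2_{\phi^\ep_t}\bigr)\,S^G(\phi^\ep_t)\,d\mu_{\phi^\ep_t}
=-\frac{\ep}{n!}\int_X S^G(\phi^\ep_t)\,\om^n .
\]
To conclude $\frac{d^2}{dt^2}E^G(\phi^\ep_t)\ge -C\ep$ with $C$ independent of $\ep$ you need $\int_X |S^G(\phi^\ep_t)|\,\om^n$ to be bounded uniformly in $t$ and $\ep$. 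That is an integrated bound on four derivatives of $\phi^\ep_t$, whereas the only estimates for $\ep$-geodesics that are uniform as $\ep\to 0$ are Chen's $C^{1,1}$ bounds; higher-order norms genuinely degenerate in the limit. So the constant in your ``$O(\ep)$'' may blow up as $\ep\to 0$ and the limiting argument collapses. This is precisely why \cite{ch} is long and hard: Chen does not run the naive second-variation computation along $\ep$-geodesics, but instead exploits the explicit decomposition of the K-energy into an entropy term plus pluripotential-theoretic terms that remain meaningful at $C^{1,1}$ regularity, together with delicate approximation arguments; ``carrying over the a priori estimates from \cite{ch}'' does not supply the missing uniform fourth-order control. (The other point you defer, the variation of the projection $\Pi^G_\phi$, is comparatively harmless: $\Pi^G_{\phi}S(\phi)$ is the normalized potential of the fixed extremal field, so its $t$-derivative is computable; but it too must be written out.) The paper's quantization proof sidesteps this entire difficulty -- exact convexity upstairs, asymptotic expansions downstairs -- which is exactly what the polarization hypothesis buys; conversely, if your route were completed it would prove the stronger, non-polarized statement, which is Chen's theorem rather than Proposition~\ref{prop:convex}.
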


\noindent Note that in \cite{ch}, Chen makes no polarization assumption, while our method to obtain this inequality is by quantization, following Chen and Sun \cite{cs}. Together with Proposition \ref{prop:test}, Proposition \ref{prop:convex} enables us to prove Theorem \ref{theo:defo}.

\subsection{Plan of the paper}
We start with definitions of extremal metrics, modified K-energy and relative Futaki invariant in Section \ref{sec:background}. Section \ref{sec:convex} is devoted to the proof of Proposition \ref{prop:convex} using quantization. The proof of Proposition \ref{prop:test} is done in Section \ref{sec:test}. We shall mention that Section \ref{sec:convex} and Section \ref{sec:test} are independent. Lastly, we prove Theorem \ref{theo:defo} in section \ref{sec:prooftheo} and study an application in Section \ref{sec:ap}.

\subsection{Acknowledgments}
The authors would like to thank Song Sun and Valentino Tosatti for their encouragement and comments. They are also grateful to Vestislav Apostolov and Gabor Sz\'ekelyhidi for useful discussions. This work was largely undertaken while the first author was a post-doctoral fellow at the Universidade de S\~ao Paulo. He would also like to acknowledge the support of FAPESP processo 2011/07363-6 and Claudio Gorodski for many fruitful mathematical discussions. The second author is supported by a CRM-ISM grant and would like to acknowledge CIRGET for providing a stimulating mathematical environment.

\section{Extremal metrics}
\label{sec:background}
We define extremal metrics in this section, and collect some standard facts about the modified Mabuchi functional and the relative Futaki invariant that will be used in the paper. 
\subsection{Definition}
Let $(X,L)$ be a polarized K\"ahler manifold of complex dimension $n$. Let $\cH$ be the space of smooth K\"ahler potentials with respect to a fixed K\"ahler form $\om \in c_1(L)$  :
\begin{eqnarray*}
\cH= \lbrace \phi \in C^{\infty} (X) \vert \om_{\phi}:=\om + \sqrt{-1}\del\delb \phi > 0 \rbrace
\end{eqnarray*}
In order to find a canonical representative of a K\"ahler class, Calabi \cite{c1} suggested considering the functional
\begin{eqnarray*}
 Ca :  \cH & \to & \R \\
  \phi & \mapsto & \int_X (S(\phi)-\uS)^2 d\mu_{\phi}
\end{eqnarray*}
where $S(\phi)$ is the scalar curvature of the metric $g_{\phi}$ associated to the K\"ahler form $\om_{\phi}$, $$\uS=2n\pi\frac{c_1(L)\cup [\om]^{n-1}}{[\om]^n}$$ is the average of the scalar curvature, an invariant of the K\"ahler class, and $d\mu_{\phi}=\dfrac{\om_{\phi}^n}{n!}$ the volume form of $g_{\phi}$.
The Hessian of $Ca$ at a critical point is positive, and the local minima are called {\it extremal metrics}. 
The associated Euler-Lagrange equation is equivalent to the fact that $grad_{\om_{\phi}}(S(\phi))$ is a holomorphic vector field.
In particular, constant scalar curvature K\"ahler metrics, CSCK for short, are extremal metrics.
\par
By a result of Calabi \cite{c2}, the connected component of the identity of the isometry group of an extremal metric is a maximal compact connected subgroup of the reduced automorphism group $\Aut_0(X)$. Note that the latter group is isomorphic to the connected component of identity of $\Aut(X,L)$.This is the motivation for working modulo a maximal compact subgroup of $\Aut(X,L)$ when dealing with extremal metrics. However complex deformations do not in general preserve such symmetries, so we will instead work modulo any connected compact subgroup of $\Aut(X,L)$ and define the relevant functionals in this case. Let $G$ be a compact connected subgroup of $\Aut(X,L)$. We assume now that $\om$ is $G$-invariant and denote $\cH^G$ the space of $G$-invariant potentials.

\subsection{Modified K-energy}
\label{sec:modified}

For a fixed $G$-invariant K\"ahler metric $g_{\phi}$, we say that a vector field $V$ is a hamiltonian vector field 
if there is a real valued function $f$ such that
\begin{eqnarray*}
V=J\nabla_{g_{\phi}} f.
\end{eqnarray*}
If in addition $V$ is Killing, we say that the function $f$ is a Killing potential.
Let $\g$ be the Lie algebra of $G$.
For any $\phi\in\cH^G$, let $P_{\phi}^G$ be the space of normalized (i.e. of mean value zero) Killing potentials with respect to $g_{\phi}$ 
whose corresponding hamiltonian vector field lies in $\g$ and let $\Pi_{\phi}^G$ be the orthogonal projection from $L^2(X,\R)$ to $P_{\phi}^G$ given by the inner product on functions 
$$
(f,g) \mapsto \int fg d\mu_{\phi}.
$$
Note that $G$-invariant metrics satisfying $S(\phi)-\uS-\Pi_{\phi}^G S(\phi)=0$ are extremal.

\begin{definition}\cite[Section 4.13]{gbook}
The reduced scalar curvature $S^G$ with respect to $G$ is defined by
\begin{eqnarray*}
S^G(\phi)=S(\phi)-\uS-\Pi_{\phi}^G S(\phi).
\end{eqnarray*}
The extremal vector field $V^G$ with respect to $G$ is defined by the equation
$$
V^G=\nabla_g (\Pi_\phi^G S(\phi))
$$
for any $\phi$ in $\cH^G$ and does not depend on $\phi$ (see for example \cite[Proposition 4.13.1]{gbook}).
\end{definition}

\begin{remark}
Note that by definition the extremal vector field relative to $G$ is real-holomorphic and lies in $J\g$ where $J$ is the almost-complex structure of $X$, while $JV^G$ lies in $\g$.
\end{remark}

\begin{remark}
When $G=\lbrace 1 \rbrace$ we recover the normalized scalar curvature. When $G$ is a maximal compact connected subgroup, or maximal torus of $\Aut_0(X)$, we find the reduced scalar curvature
and the extremal vector field initially defined by Futaki and Mabuchi \cite{fm}. The extremal vector field only depends on the K\"ahler class and the choice of the maximal compact connected subgroup of $\Aut(X,L)$.
\end{remark}

\par
The relative Mabuchi K-energy was introduced by Guan \cite{gu}, Chen and Tian \cite{ct}, and Simanca \cite{si}:

\begin{definition}\cite[Section 4.13]{gbook} 
\label{def:modK}
The modified K-energy or modified Mabuchi energy (relative to $G$) $E^G$ is defined, up to a constant, as the primitive of the following one-form on $\cH^G$:
$$
\phi \mapsto - S^G(\phi) d\mu_{\phi}.
$$
\end{definition}

\noindent If $\phi\in \cH^G$, then the modified K-energy relative to $G$ admits the following expression
$$
E^G(\phi)=-\int_X \phi (\int_0^1S^G(t\phi) d\mu_{t\phi} dt) .
$$
As for CSCK metrics, $G$-invariant extremal metrics whose extremal vector field lies in $J\g$ are critical points of the modify K-energy $E^G$.

An important point that we will use several times in the sequel is the following remark:

\begin{remark}
\label{rmk:mab}
The modified K-energy $E^{G^m}$ is defined to be the modified K-energy with respect to a maximal compact connected subgroup $G^m$ of $\Aut(X,L)$. Let $G^m$ be such a maximal compact connected group, and let $G$ be a compact connected subgroup of $G^m$. Assume that $Lie(G)$ contains the extremal vector field of $(X,c_1(L))$ with respect to $G^m$. Then $E^{G^m}$ is equal to $E^G$ when restricted to the space of $G^m$-invariant potentials.
Indeed, the projection of any $G^m$-invariant scalar curvature to the space of Killing potentials of $Lie(G^m)$ gives a potential for the extremal vector field by definition.
Thus a minimiser of $E^G$ that is invariant under the $G^m$-action, such as an extremal metric, will be a minimum of the standard modified Mabuchi Energy.
\end{remark}

The Calabi energy can also be generalized :

\begin{definition}
The modified Calabi functional $Ca^G$ with respect to $G$ is defined on $\cH^G$ by
$$
Ca^G(\phi)=\int_X (S^G(\phi))^2 d\mu_{\phi}.
$$
\end{definition}

\subsection{Relative Futaki invariant}
Let $\h_0$ be the Lie algebra of $\Aut(X,L)$. We want to work modulo the $G$-action, so let $\h^G$ be the Lie algebra of the normalizer of $G$ in $\Aut(X,L)$. Introduced by Futaki as an obstruction to the existence of K\"ahler-Einstein metric \cite{fut}, the Futaki character has been generalized to any K\"ahler class and admits a relative version.
\noindent It is well known that for any K\"ahler metric $g_{\phi}$ representing the K\"ahler class $c_1(L)$, each element $V$ of $\h_0$ can be uniquely written
$$
V=\nabla_{g_\phi}(f_\phi^V)+J\nabla_g{_\phi}(h_\phi^V)
$$
where $f_\phi^V$ and $h_\phi^V$ are real valued functions on $X$, normalized to have mean value zero (see e.g. \cite[Lemma 2.1.1]{gbook}). We will call $f_\phi^V$ the {\it real potential} of $V$ with respect to $g_\phi$.

\begin{definition}\cite[Defn. 7]{gbook}\label{defn:FutChar}
The Futaki character relative to $G$, denoted $\cF^G$, is defined by :
\begin{eqnarray*}
\mathcal{F}^G :  \h^G/\g & \rightarrow & \R \\
  V & \mapsto & \int_X f_\phi^V S^G(\phi) d\nu_{\phi}
 \end{eqnarray*}
with $\phi\in\cH^G$.
\end{definition}

We sum-up some properties of this invariant that we shall need in the sequel of this paper. The proof of these facts are due to Futaki (see e.g. \cite{futbook}), and a proof of their relative versions can be found in \cite{gbook}.

\begin{proposition}
The Futaki invariant relative to $G$ does not depend on the choice of the K\"ahler metric $g_\phi$ for $\phi\in\cH^G$ and is well defined. If there is an extremal metric on $X$ in the K\"ahler class $c_1(L)$ whose extremal vector field lies in $J\g$, then $\mathcal{F}^G$ is identically zero.
\end{proposition}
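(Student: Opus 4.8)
The plan is to establish the two assertions in turn, deriving the vanishing statement from the invariance statement. Throughout, for $V\in\h_0$ I write the holomorphy potential $\theta_\phi^V:=f_\phi^V+\sqrt{-1}\,h_\phi^V$, normalized to have mean value zero, so that $V^{1,0}=\nabla_{g_\phi}^{1,0}\theta_\phi^V$ and $f_\phi^V=\mathrm{Re}\,\theta_\phi^V$ is exactly the real potential of the decomposition $V=\nabla_{g_\phi}f_\phi^V+J\nabla_{g_\phi}h_\phi^V$. First I would check that the defining formula descends to $\h^G/\g$: if $W\in\g$, then since $\om_\phi$ is $G$-invariant for $\phi\in\cH^G$ the vector field $W$ is Killing for $g_\phi$, hence its real potential vanishes, $f_\phi^W=0$, and by linearity of the decomposition $\mathcal{F}^G(V+W)=\mathcal{F}^G(V)$. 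Thus the pairing is well defined on the quotient.

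The heart of the matter is independence of the metric $g_\phi$, $\phi\in\cH^G$. I would split
\begin{eqnarray*}
\mathcal{F}^G(V)&=&\int_X f_\phi^V\,(S(\phi)-\uS)\,d\mu_\phi-\int_X f_\phi^V\,\Pi_\phi^G S(\phi)\,d\mu_\phi.
\end{eqnarray*}
The first term is the real part of the classical Futaki invariant $\int_X\theta_\phi^V(S(\phi)-\uS)\,d\mu_\phi$, whose independence of $\phi$ is Futaki's theorem. For the second term, recall that $V^G=\nabla_{g_\phi}(\Pi_\phi^G S(\phi))$ is independent of $\phi$; hence the fixed field $W_0:=JV^G$ lies in $\g$ and, since $f_\phi^{W_0}=0$, its Killing potential satisfies $\Pi_\phi^G S(\phi)=h_\phi^{W_0}$. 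A short computation then gives $\int_X f_\phi^V h_\phi^{W_0}\,d\mu_\phi=\mathrm{Im}\int_X\theta_\phi^V\theta_\phi^{W_0}\,d\mu_\phi$, the imaginary part of the Futaki--Mabuchi bilinear pairing, which is again independent of $\phi$. Combining the two terms shows that $\mathcal{F}^G(V)$ does not depend on $\phi\in\cH^G$.

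Granting invariance, the vanishing statement is immediate. Let $\om_{\phi_*}$ be a $G$-invariant extremal representative of $c_1(L)$ whose extremal vector field lies in $J\g$. Then $S(\phi_*)-\uS$ is a Killing potential whose Hamiltonian field lies in $\g$, so $\Pi_{\phi_*}^G S(\phi_*)=S(\phi_*)-\uS$ and therefore $S^G(\phi_*)=0$. Hence $\mathcal{F}^G(V)=\int_X f_{\phi_*}^V\,S^G(\phi_*)\,d\mu_{\phi_*}=0$ for every $V\in\h^G$, and by the invariance just established $\mathcal{F}^G\equiv 0$ for the computation with any $\phi\in\cH^G$.

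The main obstacle is the projection term $\int_X f_\phi^V\,\Pi_\phi^G S(\phi)\,d\mu_\phi$, which has no analogue in the non-relative setting: one must recognise that, although $\Pi_\phi^G S(\phi)$ itself varies with $\phi$, the associated holomorphic field $V^G$ is fixed, so the term repackages as a value of the metric-independent Futaki--Mabuchi form. If instead one wants a self-contained argument avoiding the black boxes above, I would differentiate $\mathcal{F}^G$ along the convex linear path $\phi_s$ in $\cH^G$, insert the standard linearisations of $S(\phi)$ (through the Lichnerowicz operator $\cD_\phi$), of $d\mu_\phi$, and of the potentials $f_\phi^V$ and $\Pi_\phi^G S(\phi)$, and integrate by parts; the surviving terms then cancel because holomorphy potentials---both $f_\phi^V$ and the Killing potential $\Pi_\phi^G S(\phi)$---lie in $\ker\cD_\phi^*\cD_\phi$. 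Carefully tracking the variation of the projection $\Pi_\phi^G$ is the delicate point of that alternative route.
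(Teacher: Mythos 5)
The paper itself gives no proof of this proposition: it defers to Futaki \cite{futbook} for the absolute statements and to Gauduchon \cite{gbook} for the relative versions, where metric-independence is obtained by the variational computation you only sketch as an ``alternative route'' (differentiating along a path in $\cH^G$ and tracking the Lichnerowicz operator and the varying projection $\Pi^G_\phi$). Your primary argument is therefore a genuinely different proof, and for well-definedness and metric-independence it is correct. The descent to $\h^G/\g$ is fine: a field $W\in\g$ is Killing for any $g_\phi$ with $\phi\in\cH^G$, and a Killing field in $\h_0$ has vanishing (mean-zero) real potential. The key step --- writing $\mathcal{F}^G(V)$ as $\mathrm{Re}\int_X\theta_\phi^V(S(\phi)-\uS)\,d\mu_\phi$ minus $\mathrm{Im}\int_X\theta_\phi^V\theta_\phi^{W_0}\,d\mu_\phi$ with $W_0=JV^G\in\g$ --- is also sound: since $V^G$ is metric-independent (a fact the paper itself quotes from \cite{gbook} when defining $V^G$, so there is no circularity), $W_0$ is a fixed element of $\g$ with $f_\phi^{W_0}=0$, and uniqueness of the normalized decomposition gives $\theta_\phi^{W_0}=\sqrt{-1}\,\Pi_\phi^G S(\phi)$; your identity $\int_X f_\phi^V\,\Pi_\phi^G S(\phi)\,d\mu_\phi=\mathrm{Im}\int_X\theta_\phi^V\theta_\phi^{W_0}\,d\mu_\phi$ then checks out. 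What this buys is that everything reduces to two quotable, metric-independence theorems already in the paper's bibliography --- Futaki's theorem and the Futaki--Mabuchi bilinear form \cite{fm} --- instead of redoing the variational argument; this is arguably cleaner than the cited proofs.

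The vanishing assertion, however, contains a genuine gap. The hypothesis provides an extremal metric whose extremal vector field lies in $J\g$; you immediately take it to be $G$-invariant, $\phi_*\in\cH^G$. That is not given, and your argument needs it twice: $S(\phi_*)-\uS$ can only be recognized as an element of $P^G_{\phi_*}$ and projected by $\Pi^G_{\phi_*}$ if $g_{\phi_*}$ is $G$-invariant, and $\mathcal{F}^G$ may only be evaluated at potentials in $\cH^G$ in the first place. The repair is a real argument, not a remark. By Calabi \cite{c2}, the identity component $K$ of the isometry group of the given extremal metric $g$ is a maximal compact connected subgroup of the identity component of $\Aut(X,L)$, and $\h_0=\k\oplus J\k$; the extremal field $V=\grad_g S(g)$ is fixed by $K$, so $JV$ is central in $\k$, and since these are real-holomorphic fields, $[JV,JY]=J[JV,Y]=0$ for $Y\in\k$, so $JV$ is central in all of $\h_0$. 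Choosing $\alpha$ in the identity component of $\Aut(X,L)$ with $G\subset\alpha K\alpha^{-1}$ (conjugacy of maximal compact connected subgroups), the metric $(\alpha^{-1})^*g$ is extremal and $G$-invariant, and its extremal field $\alpha_*V$ equals $V\in J\g$ precisely because $V$ is central. Only after this reduction does your computation $\Pi^G_{\phi_*}S(\phi_*)=S(\phi_*)-\uS$, hence $S^G(\phi_*)=0$, hence $\mathcal{F}^G\equiv 0$ by the invariance already established, go through. (In the only place the paper uses the vanishing, Section \ref{sec:prooftheo}, the extremal metric on the central fibre is $G$-invariant by construction, so the version you actually proved suffices there; but as a proof of the proposition as stated, the reduction step must be supplied.)
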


\section{A convexity inequality on $\cH^G$ via quantization}
\label{sec:convex}
The aim of this section is the proof of Proposition~\ref{prop:convex}.

\subsection{Quantization: the space of potentials}
For each $k$, we can consider the space $\cH_k$ of hermitian metrics on $L^{\otimes k}$ with positive curvature. To each element $h\in \cH_k$ one associates a metric $\omega_h= -\sqrt{-1} \del\delb log(h)$ on $X$, thereby identifying the spaces $\cH_k$ and $\cH$. 
Fixing a base metric $h_0$  in $\cH_1$ such that $\om=\om_{h_0}$ the correspondence reads
\begin{eqnarray*}
\om_{\phi}=\om_{e^{-\phi}h_0}=\om+\sqrt{-1}\del\delb \phi .
\end{eqnarray*}
We denote by $\cB_k$ the space of positive definite Hermitian forms on $H^0(X,L^{\otimes k})$.
The spaces $\cB_k$ are identified with $GL_{N_k}(\C)/ U(N_k)$, using the base metric $h_0^k$ and where $N_k$ is the dimension of $H^0(X,L^k)$. These symmetric spaces come with metrics $d_k$ defined by  Riemannian metrics:
\begin{eqnarray*}
(H_1,H_2)_h=Tr(H_1H^{-1}\cdot H_2 H^{-1}).
\end{eqnarray*}

\noindent There are maps :
\begin{eqnarray*}
Hilb_k :  \cH & \rightarrow &\cB_k \\
FS_k : \cB_k &\rightarrow &\cH
\end{eqnarray*}
defined by :
\begin{eqnarray*}
\forall h\in \cH\;, \; s\in H^0(X,L^{\otimes k})\;, \; \vert\vert s\vert\vert ^2_{Hilb_k(h)}=\int_X \vert s \vert_h^2 d\mu_h
\end{eqnarray*}
and
\begin{eqnarray*}
\forall H \in \cB_k\; , \; FS_k(H)= \frac{1}{k} \log \sum_{\alpha} \vert s_{\alpha}\vert_{h_0^k}^2
\end{eqnarray*}
where  $\lbrace s_{\alpha}\rbrace$ is an orthonormal basis of $H^0(X,L^{\otimes k})$ with respect to $H$. Note that $\om_{FS_k(H)}$ is the pull-back of the Fubini-Study metric on 
$\P(H^0(X,L^k)^*)$ that is induced by the inner product $H$ on $H^0(X,L^k)$. 
A result of Tian \cite{tian90} states that any K\"ahler metric $\om_{\phi}$ in $c_1(L)$ can be approximated by projective metrics, namely
\begin{eqnarray*}
\lim_{k\rightarrow \infty} \frac{1}{k} FS_k \circ Hilb_k (\phi) = \phi
\end{eqnarray*}
where the convergence is uniform on $C^2(X,\R)$ bounded subsets of $\cH$.
\noindent 
Let $G$ be a compact connected subgroup of $\Aut(X,L)$.
We can assume that $\Aut(X,L)$ acts on $L$, considering a sufficiently large tensor power if necessary (see e.g. \cite{kob}).
Then the $G$-action on $X$ induces a $G$-action on the space of sections $H^0(X,L^k)$. This action in turn provides a $G$-action on the space  $\cB_k$  of positive definite hermitian forms on $H^0(X,L^k)$ and we define $\cB_k^G$ to be the subspace of $G$-invariant elements.
Note that the spaces $\cB_k^G$ are totally geodesic in $\cB_k$ for the distances $d_k$.
There are the induced maps :
$$
\begin{array}{cccc}
Hilb_k : & \cH^G & \rightarrow &\cB_k^G \\
FS_k :& \cB_k^G &\rightarrow &\cH^G.
\end{array}
$$
By a result of Chen and Sun \cite{cs}, the metric spaces $(\cB_k,d_k)$ converge to $(\cH,d)$ where $d$ is the Weyl-Petersson metric given by
$$
(\delta\phi_1,\delta \phi_2)_{\phi} = \int_X \delta\phi_1 \delta\phi_2 d\mu_{\phi}.
$$
Consider the induced Weyl-Petersson metric on $\cH^G$ and the associated distance function $d_{\cH^G}$. The space $\cH^G$ is a totally geodesic subspace of $\cH$ for the distance $d$ as a the set of fixed points by an isometry group. Thus the following results is a direct consequence of the work of Chen and Sun \cite{cs} :

\begin{theorem}\cite[Thm. 1.1]{cs}
\label{theo:CS}
Given any $\phi_0$, $\phi_1$ in $\cH^G$, we have
\begin{eqnarray*}
\lim_{k\rightarrow \infty} k^{-\frac{n+2}{2}} d_k(Hilb_k(\phi_0),Hilb_k(\phi_1))=d_{\cH^G}(\phi_0,\phi_1).
\end{eqnarray*}
\end{theorem}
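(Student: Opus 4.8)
The plan is to deduce this statement directly from Chen and Sun's theorem on the full spaces $\cH$ and $\cB_k$, by exploiting the $G$-equivariance of the entire quantization picture. Nothing new has to be proved analytically: the convergence $k^{-\frac{n+2}{2}}d_k(Hilb_k(\phi_0),Hilb_k(\phi_1))\to d(\phi_0,\phi_1)$ already holds for arbitrary $\phi_0,\phi_1\in\cH$, so the only thing to check when $\phi_0,\phi_1$ are $G$-invariant is that the ambient limit $d(\phi_0,\phi_1)$ coincides with the intrinsic distance $d_{\cH^G}(\phi_0,\phi_1)$ computed inside $\cH^G$.

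First I would record the equivariance. Since $\om$ is $G$-invariant, $G$ acts on $\cH$ by $\phi\mapsto \sigma^*\phi$, and this action preserves the Weyl--Petersson metric, because the norm $(\delta\phi_1,\delta\phi_2)_\phi=\int_X\delta\phi_1\,\delta\phi_2\,d\mu_\phi$ is built from the $G$-invariant volume form $d\mu_\phi$. Hence $G\subset\Isom(\cH,d)$ and its fixed-point set is exactly $\cH^G$. Likewise the induced $G$-action on $H^0(X,L^k)$ makes $G$ act by isometries on $(\cB_k,d_k)$ with fixed-point set $\cB_k^G$, and $Hilb_k$ is $G$-equivariant by construction, so it maps $\cH^G$ into $\cB_k^G$.

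Next I would use the totally geodesic property to identify the distances. A fixed-point set of a group acting by isometries is totally geodesic; this is the fact already quoted for $\cB_k^G\subset\cB_k$, and it holds for $\cH^G\subset\cH$ as well. Concretely, given $\phi_0,\phi_1\in\cH^G$, take the minimizing geodesic $\gamma$ joining them (the $C^{1,1}$ geodesic of Chen in the infinite-dimensional case, the ordinary geodesic of the non-positively curved symmetric space $GL_{N_k}(\C)/U(N_k)$ in the finite-dimensional case). By uniqueness of such a geodesic together with equivariance, $\sigma\cdot\gamma$ is again a minimizing geodesic with the same endpoints for every $\sigma\in G$, hence $\sigma\cdot\gamma=\gamma$ and $\gamma$ stays in the fixed-point set. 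Since paths in $\cH^G$ form a subset of all paths in $\cH$, the intrinsic distance always dominates the ambient one, while $\gamma$ realizes the ambient distance inside $\cH^G$; therefore $d_{\cH^G}(\phi_0,\phi_1)=d(\phi_0,\phi_1)$. Feeding this identity into Chen and Sun's pointwise convergence applied to $\phi_0,\phi_1\in\cH^G\subset\cH$ yields the claimed limit, where the left-hand side is unchanged since it uses the ambient $d_k$.

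The one genuine obstacle is the totally geodesic statement itself, namely that the minimizing geodesic between two $G$-invariant potentials is again $G$-invariant. In finite dimensions this is standard for symmetric spaces of non-positive curvature, and gives the totally geodesic property of $\cB_k^G$. In the infinite-dimensional setting one must instead invoke the uniqueness of Chen's weak geodesics for the Mabuchi metric and verify that the equivariance argument is compatible with their $C^{1,1}$ regularity; this is the only place where the analytic structure of the Weyl--Petersson metric, rather than a purely formal restriction, is really used.
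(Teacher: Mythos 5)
Your proposal is correct and takes essentially the same route as the paper: the authors likewise deduce the statement directly from Chen--Sun's theorem on the ambient spaces $(\cH,d)$ and $(\cB_k,d_k)$, observing that $\cH^G$ (like $\cB_k^G$) is totally geodesic as the fixed-point set of a group of isometries, so that $d_{\cH^G}$ coincides with the restriction of $d$ and the limit is unchanged. The only difference is that you explicitly flag the reliance on uniqueness and $C^{1,1}$ regularity of Chen's weak geodesics to justify the totally geodesic claim in infinite dimensions, a point the paper asserts without comment.
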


\subsection{Quantization of extremal metrics}

In order to find a finite dimensional approximation of extremal metrics, 
Sano introduced the $\sigma$-balanced metrics (see  \cite{st} for a first application of these metrics):

\begin{definition}
Let $\sigma_k(t)$ be a one-parameter subgroup of $\Aut(X,L^k)$. A metric $\om_\phi$ is called a $\sigma_k$-balanced metric if 
$$
\om_{kFS_k\circ Hilb_k(\phi)}=\sigma_k(1)^*\om_{k\phi}
$$
\end{definition}

Conjecturally, the $\sigma$-balanced metrics would approximate an extremal K\"ahler metric and generalize Donaldson's results \cite{Don01} and Mabuchi's work \cite{ma04}. Indeed, in one direction, assume that we are given $\sigma_k$-balanced metrics $\om_{\phi_k}$, with $\sigma_k(t)\in \Aut(X,L^k)$ such that the $\om_k$ converge to $\om_{\infty}$.
Suppose that the vector fields $k\frac{d}{dt}\vert_{t=0}\sigma_k(t)$ converge to a vector field $V_{\infty}\in \h_0$. 
Then a simple calculation implies that $\om_{\infty}$ must be extremal.
Let $\sigma$ be a one parameter subgroup of $\Aut(X,L)$ generated by a vector field $V$ and consider the normalized vector fields $V_k=-\frac{V}{4k}$ and the associated one-parameter groups $\sigma_k$. 
Define for each $\phi\in \cH$ the functions $\psi_{\sigma_k,\phi}$ by
$$
\sigma_k(1)^*\om_{\phi}=\om_{\phi}+\sqrt{-1} \del\delb \psi_{\sigma_k,\phi}.
$$
normalized by
\begin{eqnarray*}
\forall k\ \ \ \ \; \int_X \exp{(\psi_{\sigma_k,\phi})}\; d\mu_{\phi} = \frac{N_k}{k^n}.
\end{eqnarray*}
Define $I_k=\log\circ \det$ on $\cB_k$. This functional is defined up to an additive constant when we see $\cB_k$ as a space of positive Hermitian matrix
once a suitable basis of $H^0(X,L^k)$ is fixed. 
Then we define for each $k$
\begin{eqnarray*}
\delta I^{\sigma}_k(\phi)(\delta\phi)=\int_X k\delta \phi (1+\frac{\Delta_{\phi}}{k})e^{\psi_{\sigma_k,\phi}} k^n d\mu_{\phi}
\end{eqnarray*}
where $\Delta_{\phi}=-g_{\phi}^{i\overline{j}}\frac{\del}{\del z_i}\frac{\del}{\del \overline{z}_j}$ is the complex Laplacian of $g_{\phi}$.

\begin{remark}
This one-form integrates along paths in $\cH$ to a functional $I_k^{\sigma}(\phi)$ on $\cH$, which is independent on the path used from $0$ to $\phi$ \cite{st}.
\end{remark}

Then we define $Z^{\sigma}_k$ on $\cB_k$ by
\begin{eqnarray*}
Z^{\sigma}_k =  I_k^{\sigma}\circ FS_k + I_k-k^n\log(k^n)V.
\end{eqnarray*}

\begin{remark}
The definition of the functionals $I_k^{\sigma}$ and $Z_k^{\sigma}$ is motivated by Donalsdon's work in the CSCK case \cite{Don05}.
\end{remark}

\noindent Let $G^m$ be a maximal compact subgroup of $\Aut(X,L)$. Let $G$ be a compact connected subgroup of $G^m$ such that $JV^{G_m}$ is contained in its Lie algebra.
By a theorem of Futaki and Mabuchi \cite{fm}, the vector field $JV^{G^m}$ generates a periodic action by a one parameter-subgroup of automorphisms of $(X,L)$. We fix $\sigma(t)$ to be this one-parameter group.
In that case, the functionals $Z_k^{\sigma}$ approximate the modified Mabuchi functional \cite{st}. We will use the following results :

\begin{proposition}\cite{st}
\label{prop:Zk}
The functional $Z^{\sigma}_k$ is convex along geodesics in $\cB_k^G$.
\par
\noindent There are constants $c_k$ such that 
$$
\frac{2}{k^n}Z_k^{\sigma}\circ  Hilb_k + c_k \rightarrow E^G
$$
as $k\rightarrow \infty$, where the convergence is uniform on $C^l(X,\R)$ bounded subsets of $\cH^G$.
\end{proposition}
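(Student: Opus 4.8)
The plan is to treat the two assertions separately, working on the symmetric space $\cB_k\cong GL_{N_k}(\C)/U(N_k)$ and then restricting to the totally geodesic subspace $\cB_k^G$.

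\textbf{Convexity.} First I would parametrize a geodesic $H_t$ through $H_0$ in $\cB_k^G$ by a Hermitian, $G$-invariant endomorphism $A$, so that $I_k=\log\circ\det$ is affine in $t$ (because $\log\det H_t=\log\det H_0+t\,\trace A$) and the term $k^n\log(k^n)V$ is linear. Consequently the second variation of $Z_k^\sigma$ reduces to that of $I_k^\sigma\circ FS_k$, which is the heart of this part. Setting $\phi_t=FS_k(H_t)$ and differentiating the one-form defining $I_k^\sigma$ twice, I would express $\tfrac{d^2}{dt^2}(I_k^\sigma\circ FS_k)(H_t)$ as a nonnegative quadratic form in $\dot\phi_t$ (a variance-type expression), exactly as in Donaldson's treatment of the untwisted functional; the twist by the one-parameter group $\sigma_k$ is harmless because $\sigma_k(t)\in\Aut(X,L^k)$ acts holomorphically and $\psi_{\sigma_k,\phi}$ is normalized. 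Restriction to $\cB_k^G$ preserves convexity since this subspace is totally geodesic for $d_k$.

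\textbf{Asymptotics.} Since both $E^G$ and $I_k^\sigma$ are defined only up to additive constants, it suffices to show that the differentials converge, i.e. that $d\big(\tfrac{2}{k^n}Z_k^\sigma\circ Hilb_k\big)(\phi)\to dE^G(\phi)=-S^G(\phi)\,d\mu_\phi$ uniformly on $C^l$-bounded subsets of $\cH^G$; the constants $c_k$ then record the values at a fixed base point. I would compute this differential by combining three ingredients: Tian's approximation $\tfrac1k FS_k\circ Hilb_k\to \Id$, together with the uniform Tian--Yau--Zelditch expansion of the Bergman density $\rho_k(\phi)=\sum_\alpha|s_\alpha|^2=k^n+\tfrac12 S(\phi)k^{n-1}+O(k^{n-2})$; the first-variation formula for $I_k^\sigma$ stated above; and the affine behaviour of $I_k$. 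The leading $k^n$ contributions cancel by construction (this is the role of the subtracted linear term and of the normalization $\int_X e^{\psi_{\sigma_k,\phi}}\,d\mu_\phi=N_k/k^n$), so that after multiplying by $2/k^n$ the surviving order-$k^{n-1}$ term delivers the scalar curvature.

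\textbf{The twist, the projection, and the main obstacle.} The key subtlety is that $\sigma_k$ is generated by $V_k=-V/4k$ with $V=JV^{G^m}$, so that expanding $e^{\psi_{\sigma_k,\phi}}$ to subleading order in $1/k$ should remove the average $\uS$ and insert precisely the Killing-potential projection $\Pi_\phi^G S(\phi)$, thereby replacing $S(\phi)$ by the reduced scalar curvature $S^G(\phi)=S(\phi)-\uS-\Pi_\phi^G S(\phi)$ and matching $dE^G$. The step I expect to be the main obstacle is exactly this matching: one must prove that the finite-dimensional operator $\sigma_k(1)$ acting on sections approximates, to subleading order and uniformly in $\phi$, the $L^2(d\mu_\phi)$-orthogonal projection $\Pi_\phi^G$ onto Killing potentials of $\g$. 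This requires the equivariant refinement of the Bergman expansion and careful control of the dependence on $\phi$; once it is established, uniformity on $C^l$-bounded sets follows from the uniformity already built into Tian's theorem and the Bergman asymptotics.
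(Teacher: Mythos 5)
Your two-step skeleton (convexity via the affineness of $\log\det$ along geodesics plus a Donaldson-type second variation of $I^\sigma_k\circ FS_k$ on the totally geodesic $\cB_k^G$; asymptotics via convergence of differentials using Tian's theorem and the Bergman expansion) is indeed the route of the actual proof. But note what the paper itself does: it does not reprove Proposition~\ref{prop:Zk} at all — it quotes it from \cite{st} (which follows \cite{Don05,cs}), and its only original input is the remark immediately after the statement, reducing the general compact group $G$ to the one-parameter group $G_e$ generated by $\sigma(t)$ via Remark~\ref{rmk:mab}, i.e.\ the identities $E^G=E^{G_e}$ and $\Pi^G_\phi S(\phi)=\Pi^{G_e}_\phi S(\phi)$ on $\cH^G$. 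That reduction is the one piece of content specific to this paper, and it is absent from your proposal.

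The genuine gap is the step you yourself flag as ``the main obstacle'': it is misstated, misdiagnosed, and entangled with an error in your setup. First, the setup: you take $\sigma_k$ to be generated by $-JV^{G^m}/4k$; but $JV^{G^m}\in\g$ is Killing for every $G$-invariant metric, so its flow would satisfy $\sigma_k(1)^*\om_\phi=\om_\phi$ for all $\phi\in\cH^G$, making $\psi_{\sigma_k,\phi}$ constant and collapsing the twist — your mechanism could then only produce $-(S(\phi)-\uS)d\mu_\phi$, never the projection term. The generator must be the gradient field $V^{G^m}$ itself (as in Section \ref{sec:convex} of the paper, just before Proposition~\ref{prop:convx}). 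Second, your formulation of the obstacle does not typecheck: $\sigma_k(1)$ acts on $H^0(X,L^{\otimes k})$ while $\Pi^G_\phi$ projects functions in $L^2(X,\R)$; no comparison of these operators is needed, and no ``equivariant refinement'' of the Bergman expansion enters anywhere. What is needed is exactly Lemma~\ref{lem:exppsi}, and it is essentially definitional: since $JV^{G^m}\in\g$, the extremal field relative to $G$ equals $V^{G^m}$, so $V^{G^m}=\nabla_{g_\phi}(\Pi^G_\phi S(\phi))$ for every $\phi\in\cH^G$ — this is the content of Remark~\ref{rmk:mab}, the one fact you use implicitly but never verify. Granting it, $\iota_{V^{G^m}}\om_\phi=d^c\bigl(\Pi^G_\phi S(\phi)\bigr)$, so the time-one flow of $-V^{G^m}/4k$ gives $\psi_{\sigma_k,\phi}=\tfrac{1}{2k}\Pi^G_\phi S(\phi)$ up to an additive constant and $O(k^{-2})$ (signs per the paper's conventions), and the constant is forced to be $\tfrac{\uS}{2k}+O(k^{-2})$ by the normalization $\int_X e^{\psi_{\sigma_k,\phi}}d\mu_\phi=N_k/k^n$ together with $N_k=k^n\vol(X)\bigl(1+\tfrac{\uS}{2k}\bigr)+O(k^{n-2})$ (Riemann--Roch, or integrate Theorem~\ref{theo:bergman}), since $\Pi^G_\phi S(\phi)$ has mean zero. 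So the projection enters through the definition of the extremal vector field, not through spectral approximation; Bergman asymptotics are needed only where you already invoke them ($FS_k\circ Hilb_k$ and $|s_i|^2$). A secondary soft spot: in the convexity step, ``the twist is harmless'' is an assertion rather than an argument — the weight $e^{\psi_{\sigma_k,\phi_t}}$ varies along the geodesic and its $t$-derivatives enter the second variation, which is precisely what \cite{st} has to control.
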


\begin{remark}
The choice of the group $G$ is more general here than in \cite{st} where the computations are done modulo the one-parameter group generated by $\sigma(t)$. Let $G_e$ denote this group. Then the proof of Proposition~\ref{prop:Zk} only uses the choice of $\sigma(t)$ in the definition of $Z_k^{\sigma}$ and the fact that all the considered tensors are $G_e$-invariant. Using that in our situation $E^G=E^{G_e}$ by Remark~\ref{rmk:mab}, the results from \cite{st} extend here.
\end{remark}

\begin{remark}
There is no reason for $V_k=\frac{d}{dt}\vert_{t=0}\sigma_k(t)$ to be the right quantization of the extremal vector field.
The choice of the one-parameter subgroups $\sigma_k$ made in the above proposition is certainly not the appropriate one if we want to show that $\sigma$-balanced metrics approximate extremal metrics. However it will be sufficient for our purposes.
\end{remark}

\subsection{Proof of proposition~\ref{prop:convex} }

As in the previous section, $G$ denotes a compact connected subgroup of $\Aut(X,L)$ contained in some maximal compact group $G^m\subset\Aut(X,L)$ and containing in its Lie algebra the extremal vector field $JV^{G^m}$. The one parameter subgroups generated by the vector fields $-\frac{V^{G^m}}{4k}$ are denoted $\sigma_k(t)$.

The following inequality is a generalization of a result of Chen \cite{ch}:

\begin{proposition}
\label{prop:convx}
For any $\phi_0$, $\phi_1$ in $\cH^G$, 
\begin{eqnarray*}
E^G(\phi_1)-E^G(\phi_0)\leq d(\phi_0,\phi_1) \cdot \sqrt{Ca^G(\phi_1)}
\end{eqnarray*}
where $E^G$ is the modified K-energy and $Ca^G$ the modified Calabi energy.
\end{proposition}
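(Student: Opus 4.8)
The plan is to prove the inequality $E^G(\phi_1)-E^G(\phi_0)\leq d(\phi_0,\phi_1)\cdot\sqrt{Ca^G(\phi_1)}$ by quantizing both sides: replace the infinite-dimensional functionals $E^G$, $Ca^G$ and the Weil--Petersson distance $d$ by their finite-dimensional counterparts on the symmetric space $\cB_k^G$, establish the analogous inequality there, and then pass to the limit $k\to\infty$ using the convergence results collected in Theorem~\ref{theo:CS} and Proposition~\ref{prop:Zk}. This is the strategy of Chen and Sun \cite{cs} for the CSCK case; the point here is to carry it out with the $G$-relative functionals and the one-parameter group $\sigma_k(t)$ generated by $-\frac{V^{G^m}}{4k}$, so that the relevant convex functional is $Z_k^\sigma$ rather than Donaldson's unmodified $Z_k$.

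First I would work on $\cB_k^G$, which is totally geodesic in $\cB_k$ and hence a complete space of nonpositive curvature for the distance $d_k$. On this space, Proposition~\ref{prop:Zk} tells us that $Z_k^\sigma$ is convex along geodesics. For a geodesic $H(s)$ in $\cB_k^G$ joining $Hilb_k(\phi_0)$ to $Hilb_k(\phi_1)$, convexity gives the elementary first-order estimate
\begin{eqnarray*}
Z_k^\sigma(H(1))-Z_k^\sigma(H(0))\leq \langle \nabla Z_k^\sigma(H(1)), \dot H(1)\rangle_{d_k},
\end{eqnarray*}
and by Cauchy--Schwarz the right-hand side is at most $d_k(H(0),H(1))\cdot\Vert\nabla Z_k^\sigma(H(1))\Vert_{d_k}$. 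The heart of the finite-dimensional argument is then to identify the gradient norm $\Vert\nabla Z_k^\sigma\Vert_{d_k}$ at $Hilb_k(\phi_1)$ with (a suitable normalization of) the $L^2$-norm of the reduced scalar curvature $S^G(\phi_1)$, i.e. with $\sqrt{Ca^G(\phi_1)}$ up to the correct powers of $k$. Here the role of $\sigma_k$ is exactly to produce the \emph{modified} scalar curvature $S^G$ in place of $S-\uS$: differentiating $Z_k^\sigma\circ Hilb_k$ along a path in $\cH^G$ yields, after the asymptotic expansion underlying Proposition~\ref{prop:Zk}, the one-form $-S^G(\phi)\,d\mu_\phi$ plus lower-order terms in $k$, so that the quantized gradient computes the modified Calabi energy in the limit.

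With the finite-dimensional inequality in hand, I would take $k\to\infty$ and control each factor separately. For the distance term, rescaling by $k^{-\frac{n+2}{2}}$ and applying Theorem~\ref{theo:CS} gives $k^{-\frac{n+2}{2}}d_k(Hilb_k(\phi_0),Hilb_k(\phi_1))\to d(\phi_0,\phi_1)$, the Weil--Petersson distance restricted to $\cH^G$. For the energy term, the normalized difference $\frac{2}{k^n}\big(Z_k^\sigma(Hilb_k(\phi_1))-Z_k^\sigma(Hilb_k(\phi_0))\big)$ converges to $E^G(\phi_1)-E^G(\phi_0)$ by Proposition~\ref{prop:Zk}, the additive constants $c_k$ cancelling in the difference. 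For the gradient-norm term, the appropriate power of $k$ combines with the matching power from the distance so that the scalings are consistent, and the limit of the normalized gradient norm is $\sqrt{Ca^G(\phi_1)}$. Assembling the three limits yields the stated inequality.

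The main obstacle I expect is the gradient computation: showing rigorously that $\Vert\nabla Z_k^\sigma(Hilb_k(\phi_1))\Vert_{d_k}$, with the correct normalization, converges to $\sqrt{Ca^G(\phi_1)}$ and not to the unmodified Calabi energy. This requires the full asymptotic expansion of the Hilbert--FS quantization applied to the $\sigma_k$-twisted functional $I_k^\sigma$, and careful tracking of how the one-parameter group $\sigma_k(t)$ generated by $-\frac{V^{G^m}}{4k}$ subtracts off precisely the projection $\Pi_\phi^G S(\phi)$ in the limit. All the functionals and tensors involved are $G_e$-invariant for $G_e$ the group generated by $\sigma(t)$, and since $Lie(G)\supset JV^{G^m}$ we have $E^G=E^{G_e}$ by Remark~\ref{rmk:mab}; this identification is what lets the $\sigma_k$-twisted quantization, designed modulo the single one-parameter group, nonetheless compute the $G$-relative Calabi energy. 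The remaining work is technical bookkeeping of the error terms in the expansion, which are uniform on $C^l$-bounded subsets of $\cH^G$ and therefore harmless in the limit.
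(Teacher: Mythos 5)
Your proposal is correct and follows essentially the same route as the paper: convexity of $Z_k^\sigma$ on the totally geodesic subspace $\cB_k^G$ (Proposition~\ref{prop:Zk}) gives the finite-dimensional inequality, the normalized energy difference and distance converge via Proposition~\ref{prop:Zk} and Theorem~\ref{theo:CS}, and Remark~\ref{rmk:mab} reduces the $G$-relative functionals to those relative to the group generated by $\sigma(t)$. The gradient computation you flag as the main obstacle is exactly the content of the paper's Lemma~\ref{lem:gradZ}, which is proved just as you outline, via the Bergman kernel expansion of Theorem~\ref{theo:bergman} and the expansion of $\psi_k$ in Lemma~\ref{lem:exppsi}.
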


\noindent The following result will be useful :

\begin{theorem}[\cite{cat,ruan,tian90,zel}]
\label{theo:bergman}
Let
$$
\rho_k(\phi)=\sum_{\alpha}\vert s_{\alpha}\vert^2_{h^k}
$$
be the Bergman function of $\phi\in \cH$, where $h=e^{-\phi}h_0$. 
The following uniform expansion holds
\begin{eqnarray*}
\rho_k(\phi)=k^n+A_1(\phi)k^{n-1}+A_2(\phi)k^{n-2}+...
\end{eqnarray*}
with $A_1(\phi)=\frac{1}{2}S(\phi)$
and for any $l$ and $R\in \N$, there is a constant $C_{l,R}$ such that
$$
\vert\vert\rho_k(\phi) -\sum_{j\leq R} A_j k^{n-j} \vert \vert_{C^l} \leq C_{l,R} k^{n-R}.
$$
\end{theorem}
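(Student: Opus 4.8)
The plan is to establish this as the Tian--Yau--Zelditch expansion, obtaining it from the diagonal asymptotics of the Szeg\H{o} kernel in the manner of Zelditch \cite{zel} and Catlin \cite{cat}. First I would pass to the unit circle bundle: writing $h=e^{-\phi}h_0$ for the Hermitian metric on $L$ with positive curvature $\om_\phi$ and $h^*$ for the dual metric on $L^*$, set $D=\lbrace v\in L^*\ :\ |v|_{h^*}<1\rbrace$. Positivity of the curvature makes $D$ a strictly pseudoconvex domain with smooth boundary $Y=\del D$, the unit circle bundle, equipped with its natural $S^1$-action. Holomorphic sections of $L^{\otimes k}$ are identified with the weight-$k$ components of the Hardy space $H^2(Y)$ of boundary values of holomorphic functions on $D$, and under this identification the density function $\rho_k(\phi)$ becomes the restriction to the diagonal of the $k$-th Fourier component $\Pi_k$ of the Szeg\H{o} projector $\Pi\colon L^2(Y)\to H^2(Y)$.

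Next I would invoke the Boutet de Monvel--Sj\"ostrand parametrix, which describes $\Pi$ microlocally near the diagonal as a Fourier integral operator with complex phase:
\begin{eqnarray*}
\Pi(x,y)=\int_0^{\infty} e^{it\psi(x,y)}\, s(x,y,t)\, dt \pmod{C^{\infty}},
\end{eqnarray*}
where $\psi$ is a complex phase with $\Im\,\psi\geq 0$ vanishing on the diagonal, and the amplitude has an asymptotic expansion $s(x,y,t)\sim\sum_{j\geq 0} t^{n-j} s_j(x,y)$. Taking the $k$-th Fourier coefficient in the fibre variable and applying the method of stationary phase in the large parameter $k$ then produces, on the diagonal, an asymptotic series in descending powers of $k$ with leading term $k^n$. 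This is the desired expansion $\rho_k(\phi)=k^n+A_1(\phi)k^{n-1}+\dots$, once the normalizations in the definition of $\rho_k$ are accounted for.

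The coefficients $A_j$ are then local Riemannian invariants, polynomial in the curvature of $\om_\phi$ and its covariant derivatives. To identify $A_1=\tfrac12 S(\phi)$ I would either track the subprincipal symbol $s_1$ through the stationary-phase computation, or, more transparently, compute $A_1$ directly by constructing peak sections in Bochner normal coordinates in the spirit of Tian \cite{tian90}, Ruan \cite{ruan} and Lu: in such coordinates the potential reads $\phi(z)=|z|^2+O(|z|^4)$ with the quartic term governed by the curvature tensor, and matching the Bargmann--Fock model on $\C^n$ to next-to-leading order yields $A_1=\tfrac12 S(\phi)$. The uniform $C^l$ control of the remainder would follow from the uniformity of the symbol expansion and the standard remainder estimate in stationary phase, differentiated in the base variables; since every construction is local and depends smoothly on $\phi$, the constant $C_{l,R}$ depends only on $l$, $R$ and the fixed background geometry.

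The hard part will be the analytic input underlying the parametrix---the Boutet de Monvel--Sj\"ostrand description of $\Pi$ with its complex phase and symbol---and making the stationary-phase remainder estimates uniform in $C^l$ and in $R$ simultaneously. This is precisely the content of the cited works \cite{cat,ruan,tian90,zel}, so in the body of the paper I would simply quote their theorem rather than reprove it. As an alternative that yields the uniform estimates directly, one could instead follow the analytic-localization method of Ma--Marinescu, replacing the Szeg\H{o} kernel by a spectral-gap localization of the Kodaira Laplacian on $L^{\otimes k}$ and rescaling coordinates by $k^{-1/2}$ to the harmonic-oscillator (Bargmann--Fock) model, whose Bergman kernel is explicit; perturbation theory around this model then generates the coefficients $A_j$ with the required $C^l$ bounds.
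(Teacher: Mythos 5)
Your proposal is correct and amounts to the same treatment as the paper's: the paper gives no proof of Theorem \ref{theo:bergman}, quoting it directly from \cite{cat,ruan,tian90,zel}, and your sketch (Szeg\H{o} kernel on the unit circle bundle, Boutet de Monvel--Sj\"ostrand parametrix, stationary phase, peak sections to identify $A_1=\tfrac{1}{2}S(\phi)$) is precisely the standard argument contained in those references. Since you likewise conclude that the theorem should be cited rather than reproved in the body of the paper, there is nothing to reconcile.
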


\noindent We will also use the two following lemmas :

\begin{lemma}
\label{lem:exppsi}\cite{st}
Let $\psi_k(\phi)=\psi_{\sigma_k,\phi}$.
The following expansion holds uniformly in $C^l(X,\R)$ for $l\gg 1$:
$$
\psi_k(\phi)=\frac{\Pi_{\phi}^G S(\phi)+\uS}{2k}+\mathcal{O}(k^{-1}).
$$
\end{lemma}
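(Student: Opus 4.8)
The plan is to read off the expansion directly from the \emph{definition} of $\psi_k(\phi)=\psi_{\sigma_k,\phi}$, namely $\sigma_k(1)^*\om_\phi=\om_\phi+\sqrt{-1}\del\delb\psi_k(\phi)$ together with the normalization $\int_X e^{\psi_k(\phi)}\,d\mu_\phi=N_k/k^n$, by Taylor expanding the pull-back along the flow of the rescaled extremal field. Since $\sigma_k(t)$ is the flow of $-\frac{V^{G^m}}{4k}$, writing $\Phi_s$ for the flow of the \emph{fixed} real-holomorphic field $V^{G^m}$ we have $\sigma_k(1)=\Phi_{-1/(4k)}$, so
\begin{eqnarray*}
\sigma_k(1)^*\om_\phi &=& \om_\phi-\frac{1}{4k}\mathcal{L}_{V^{G^m}}\om_\phi+\mathcal{O}(k^{-2}),
\end{eqnarray*}
the remainder being the tail $\sum_{j\geq2}\frac{(-1/4k)^j}{j!}\mathcal{L}_{V^{G^m}}^j\om_\phi$ of iterated Lie derivatives. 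Because $V^{G^m}$ is a fixed smooth vector field on compact $X$, these iterated derivatives are $C^l$-bounded (in terms of finitely many derivatives of $\phi$), so for $k$ large the remainder is $\mathcal{O}(k^{-2})$ in every $C^l$.

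To identify the leading term, recall $V^{G^m}=\nabla_{g_\phi}(\Pi_\phi^{G^m}S(\phi))$ is the Riemannian gradient of a Killing potential; a direct computation of the Lie derivative of a Kähler form along a gradient field gives $\mathcal{L}_{V^{G^m}}\om_\phi=-2\sqrt{-1}\del\delb(\Pi_\phi^{G^m}S(\phi))$, the constant $4$ in the rescaling being chosen exactly so that $-\frac{1}{4k}\mathcal{L}_{V^{G^m}}\om_\phi=\frac{1}{2k}\sqrt{-1}\del\delb(\Pi_\phi^{G^m}S(\phi))$. I would then observe that on $\cH^G$ one has $\Pi_\phi^{G^m}S(\phi)=\Pi_\phi^G S(\phi)$: indeed $\Pi_\phi^{G^m}S(\phi)$ is a mean-zero Killing potential whose hamiltonian field $J\nabla_{g_\phi}(\Pi_\phi^{G^m}S(\phi))=JV^{G^m}$ lies in $\g$ by hypothesis, hence $\Pi_\phi^{G^m}S(\phi)\in P_\phi^G\subseteq P_\phi^{G^m}$, while $S(\phi)-\Pi_\phi^{G^m}S(\phi)\perp P_\phi^{G^m}\supseteq P_\phi^G$; uniqueness of the orthogonal projection then forces $\Pi_\phi^{G^m}S(\phi)$ to be the projection of $S(\phi)$ onto $P_\phi^G$. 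Writing $u:=\Pi_\phi^G S(\phi)$, this yields $\sqrt{-1}\del\delb\big(\psi_k(\phi)-\tfrac{u}{2k}\big)=\mathcal{O}(k^{-2})$ in $C^l$, and applying elliptic regularity for the $g_\phi$-Laplacian (which recovers a function from $\sqrt{-1}\del\delb$ of it up to an additive constant) shows $\psi_k(\phi)=\frac{u}{2k}+a_k+\mathcal{O}(k^{-2})$ in $C^l$, for a constant $a_k$.

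It remains to pin down $a_k$ from the normalization. Since the elements of $P_\phi^G$ are mean-zero, $\int_X u\,d\mu_\phi=0$, and since $\psi_k(\phi)=\mathcal{O}(k^{-1})$ we may expand the exponential:
\begin{eqnarray*}
\int_X e^{\psi_k(\phi)}\,d\mu_\phi &=& \vol(X)\,(1+a_k)+\mathcal{O}(k^{-2}).
\end{eqnarray*}
On the other hand the $C^l$ Bergman expansion of Theorem~\ref{theo:bergman} gives $N_k=\int_X\rho_k(\phi)\,d\mu_\phi=\vol(X)k^n+\tfrac12\big(\int_X S(\phi)\,d\mu_\phi\big)k^{n-1}+\mathcal{O}(k^{n-2})$, and $\int_X S(\phi)\,d\mu_\phi=\uS\,\vol(X)$, so $N_k/k^n=\vol(X)\,(1+\tfrac{\uS}{2k}+\mathcal{O}(k^{-2}))$. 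Equating the two expressions gives $a_k=\tfrac{\uS}{2k}+\mathcal{O}(k^{-2})$, whence $\psi_k(\phi)=\frac{\Pi_\phi^G S(\phi)+\uS}{2k}+\mathcal{O}(k^{-2})$ in $C^l$, which is the asserted expansion (with remainder in fact of order $k^{-2}$).

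The main obstacle I anticipate is making the two convergences genuinely uniform in $C^l$ on the $C^m$-bounded subsets of $\cH^G$ that the later use of this lemma requires, rather than merely pointwise in $\phi$: this forces one to control the iterated Lie derivatives $\mathcal{L}^j_{V^{G^m}}\om_\phi$ and the inverse of the $g_\phi$-Laplacian uniformly as $\phi$ varies, and to invoke the $C^l$ (not just $C^0$) form of Theorem~\ref{theo:bergman}. The careful bookkeeping of signs and constants in $\mathcal{L}_{V^{G^m}}\om_\phi$ is the other delicate point, but it is fixed once and for all by the normalizing choice $-V^{G^m}/(4k)$.
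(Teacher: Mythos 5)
Your proposal is correct and follows essentially the paper's own route: the paper gives no independent proof but defers to \cite{st}, where the argument is exactly yours --- expand $\sigma_k(1)^*\om_\phi$ along the flow of $-V^{G^m}/4k$, identify the leading potential with the Killing potential of the extremal field, and fix the additive constant from the normalization $\int_X e^{\psi_k(\phi)}d\mu_\phi=N_k/k^n$ via the Bergman expansion of $N_k$ --- and your nested-projection step $\Pi^{G^m}_\phi S(\phi)=\Pi^{G}_\phi S(\phi)$ is precisely the one ingredient the paper adds to the citation, namely Remark~\ref{rmk:mab} (phrased there as $\Pi^G_\phi S(\phi)=\Pi^{G_e}_\phi S(\phi)$ for the circle $G_e$ generated by $JV^{G^m}$). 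Two cosmetic points: your sharper remainder $\mathcal{O}(k^{-2})$ is in fact the bound the subsequent Lemma~\ref{lem:gradZ} needs (the $\mathcal{O}(k^{-1})$ in the statement is otherwise vacuous), and to expand $e^{\psi_k}$ you should first deduce $a_k=\mathcal{O}(k^{-1})$ from comparing $e^{a_k}\vol(X)(1+\mathcal{O}(k^{-1}))$ with $N_k/k^n$ before refining, which removes the slight circularity.
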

 
\begin{lemma}
\label{lem:gradZ}
Let $\phi\in\cH^G$, and let $H_k=Hilb_k(\phi)$.
Then 
\begin{equation}
\label{eq:gradZ}
\lim_{k\rightarrow \infty} k^{-n+2} \vert\vert\nabla Z_k^{\sigma}(H_k)\vert\vert^2 =\frac{1}{4} Ca^G(\phi)
\end{equation}
\end{lemma}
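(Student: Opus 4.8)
The plan is to compute $\nabla Z_k^{\sigma}(H_k)$ explicitly as a Hermitian endomorphism, expand it asymptotically using the Bergman expansion (Theorem~\ref{theo:bergman}) and Lemma~\ref{lem:exppsi}, and then evaluate its squared norm by a near-diagonal concentration argument for the Bergman kernel. First I would differentiate $Z_k^{\sigma}=I_k^{\sigma}\circ FS_k+I_k-k^n\log(k^n)V$ at $H_k=Hilb_k(\phi)$, the last term being constant in $H$. Fix an orthonormal basis $\{s_{\alpha}\}$ of $H^0(X,L^k)$ for $H_k$, so that $H_k$ is the identity matrix in this basis. A standard computation gives $d(FS_k)_{H_k}(\delta H)=-\tfrac1k\rho_k^{-1}\sum_{\alpha\beta}\delta H_{\alpha\beta}\,s_{\alpha}\bar s_{\beta}$; combining this with the one-form defining $I_k^{\sigma}$, and with $dI_k=\mathrm{Tr}(H^{-1}\,\cdot\,)$ (whose gradient is the identity, encoded through $\delta_{\alpha\beta}=\int_X\rho_k^{-1}s_{\alpha}\bar s_{\beta}\,\rho_k\,d\mu_{\phi}$), I obtain that the gradient is the Hermitian matrix
$$ (\nabla Z_k^{\sigma}(H_k))_{\alpha\beta}=\int_X\frac{s_{\alpha}\bar s_{\beta}}{\rho_k}\,W_k\,d\mu_{\phi},\qquad W_k:=\rho_k-k^n\Bigl(1+\tfrac{\Delta_{\phi}}{k}\Bigr)e^{\psi_k(\phi)}. $$
Since $Z_k^{\sigma}$ is $G$-invariant and $H_k\in\cB_k^G$, this gradient is automatically tangent to $\cB_k^G$, so it agrees with the gradient of the restriction, and the kernel $K_k(x,y)=\sum_{\alpha}s_{\alpha}(x)\overline{s_{\alpha}(y)}$ is $G$-invariant.

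Next I would expand the weight $W_k$. By Theorem~\ref{theo:bergman}, $\rho_k=k^n+\tfrac12 S(\phi)k^{n-1}+\mathcal{O}(k^{n-2})$, while Lemma~\ref{lem:exppsi} together with $e^{\psi_k}=1+\psi_k+\mathcal{O}(\psi_k^2)$ gives $k^n(1+\Delta_{\phi}/k)e^{\psi_k}=k^n+\tfrac12(\Pi_{\phi}^G S(\phi)+\uS)k^{n-1}+\mathcal{O}(k^{n-2})$, the Laplacian term being of order $k^{n-2}$. The leading $k^n$ cancels, leaving, uniformly in $C^l$,
$$ W_k=\tfrac12\bigl(S(\phi)-\uS-\Pi_{\phi}^G S(\phi)\bigr)k^{n-1}+\mathcal{O}(k^{n-2})=\tfrac12\,S^G(\phi)\,k^{n-1}+\mathcal{O}(k^{n-2}). $$
The normalization $\int_X e^{\psi_k}d\mu_{\phi}=N_k/k^n$ guarantees $\int_X W_k\,d\mu_{\phi}=0$, consistent with $\int_X S^G(\phi)\,d\mu_{\phi}=0$.

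Finally I would compute the norm. At $H_k=\mathrm{Id}$ the $d_k$-metric reads $\mathrm{Tr}(\,\cdot\,)^2$, so using $\sum_{\alpha}s_{\alpha}(x)\overline{s_{\alpha}(y)}=K_k(x,y)$,
$$ \|\nabla Z_k^{\sigma}(H_k)\|^2=\sum_{\alpha\beta}\Bigl|\int_X\tfrac{s_{\alpha}\bar s_{\beta}}{\rho_k}W_k\,d\mu_{\phi}\Bigr|^2=\int_X\!\!\int_X\frac{|K_k(x,y)|^2}{\rho_k(x)\rho_k(y)}\,W_k(x)W_k(y)\,d\mu_{\phi}(x)\,d\mu_{\phi}(y). $$
By the reproducing property $\int_X|K_k(x,y)|^2 d\mu_{\phi}(y)=\rho_k(x)$, the measure $\rho_k(x)^{-1}|K_k(x,y)|^2 d\mu_{\phi}(y)$ is a probability measure, and by the off-diagonal decay of the Bergman kernel it concentrates on the diagonal, reducing the double integral to $\int_X\rho_k^{-1}W_k^2\,d\mu_{\phi}$. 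Inserting the expansion of $W_k$ and $\rho_k=k^n(1+o(1))$ yields $\|\nabla Z_k^{\sigma}(H_k)\|^2=\tfrac14 k^{n-2}\int_X(S^G(\phi))^2\,d\mu_{\phi}+o(k^{n-2})=\tfrac14 k^{n-2}Ca^G(\phi)+o(k^{n-2})$, and multiplying by $k^{-n+2}$ gives the claim.

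The main obstacle is the analytic justification of the concentration step with errors uniform in $k$: one must show that the off-diagonal part of $|K_k(x,y)|^2$ contributes negligibly, and that the subleading pieces of $W_k$ (the $\Delta_{\phi}$ term and the $\mathcal{O}(k^{n-2})$ remainder), as well as the discrepancy between evaluating at $FS_k(H_k)$ and at $\phi$, do not survive after rescaling by $k^{-n+2}$. This rests on the uniform near-diagonal asymptotics and off-diagonal decay estimates for the Bergman kernel underlying Theorem~\ref{theo:bergman}; the cross terms are then of order $k^{n-3}$ and the remainder term of order $k^{n-4}$, both negligible in the limit.
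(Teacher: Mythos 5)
Your proposal is correct and its core — differentiating $Z_k^\sigma = I_k^\sigma\circ FS_k + I_k$, writing the gradient entries as integrals of $(s_\alpha,s_\beta)$ against a weight, and expanding that weight via Theorem~\ref{theo:bergman} and Lemma~\ref{lem:exppsi} so that the $k^n$ terms cancel and the leading contribution is $\tfrac12 S^G(\phi)k^{n-1}$ — is exactly the paper's computation, including the same bookkeeping for the discrepancy between quantities evaluated at $\phi$ and at $\phi_k=FS_k(H_k)$. Where you diverge is the last step: the paper diagonalizes $\nabla Z_k^\sigma$, reduces to the diagonal entries $\tfrac{1}{2k}\int_X S^G(\phi)|s_i|^2_\phi\, d\mu_\phi + \cO(k^{-1})$, and then simply cites the quantization identity of Chen and Sun \cite[Rmk.~3.3]{cs},
$$\lim_{k\to\infty} k^{-n}\sum_{i,j}\Bigl|\int_X (s_i,s_j)_\phi\, \psi\, d\mu_\phi\Bigr|^2=\int_X \psi^2\, d\mu_\phi,$$
to evaluate the norm; you instead re-derive this identity from scratch, rewriting $\|\nabla Z_k^\sigma(H_k)\|^2$ as a double integral against $|K_k(x,y)|^2$ and invoking the reproducing property plus off-diagonal decay of the Bergman kernel. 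The two are equivalent in content — your concentration argument is essentially the proof of the cited remark — so what you flag as "the main obstacle" is precisely what the paper disposes of by citation, and you could close it the same way. Your formulation does buy something: by keeping the full matrix inside a single double integral over $X\times X$ with weight $W_k$, the error analysis is performed on one function rather than entry by entry, which makes transparent why the subleading terms, summed over the $N_k^2\sim k^{2n}$ matrix entries, contribute only $\cO(k^{n-3})$ and hence vanish after the $k^{-n+2}$ rescaling; the paper's diagonalization plus per-entry $\cO(k^{-1})$ bookkeeping is terser and leaves that summation implicit.
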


\begin{remark}
The proof of Lemma~\ref{lem:exppsi} is the same as the one in \cite{st}. Again, we use Remark \ref{rmk:mab} which implies that for $G$-invariant potentials $\phi$, $\Pi_{\phi}^G S(\phi)=\Pi_{\phi}^{G_e} S(\phi)$ where $G_e$ is the group generated by $JV^{G^m}$.
\end{remark}

\begin{proof}[Proof of Proposition~\ref{prop:convx}]
We follow the proof of Chen and Sun \cite{cs} where $G=\lbrace 1 \rbrace$.
By Proposition~\ref{prop:Zk}, the functional $Z^{\sigma}_k$ is convex along the geodesic in $\cB_k^G$ joining $H_k^0=Hilb_k(\phi_0)$ to $H_k^1=Hilb_k(\phi_1)$, thus
$$
Z^{\sigma}_k(H^1_k)-Z^{\sigma}_k(H_k^0)\leq d_{\cB_k}(H_k^0,H_k^1) \cdot \vert\vert \nabla Z_k^{\sigma}(H_k^1)\vert\vert.
$$
Then again by Proposition~\ref{prop:Zk}, 
$$
\lim_{k\rightarrow \infty} k^{-n}(Z_k^{\sigma}(H_k^1)-Z_k^{\sigma}(H_k^0))= \frac{1}{2}(E^G(\phi_1)-E^G(\phi_0)),
$$
and by Lemma~\ref{lem:gradZ}
$$
\lim_{k\rightarrow \infty} k^{-n+2} \vert\vert\nabla Z_k^{\sigma}(H_k^1)\vert\vert^2 = \frac{1}{4}Ca^G(\phi_1).
$$
Then the proof follows from the Theorem ~\ref{theo:CS}, when $k$ goes to infinity.
\end{proof}

\noindent We conclude this section with the proof of Lemma~\ref{lem:gradZ}.

\begin{proof}[Proof of Lemma~\ref{lem:gradZ}]
Let $\phi_k=FS_k(H_k)$.
To compute the left hand side of Equation~(\ref{eq:gradZ}), let's first compute its differential
\begin{eqnarray*}
\delta (Z_k^{\sigma})_H(\delta H) &=& \delta I_k^{\sigma}\circ FS_k (\delta H) + \delta I_k (\delta H)\\
 & = & k^n \int_X (k+\Delta_{FS_k(H)})e^{\psi(FS_k(H))}(\delta FS_k(\delta H)) d\mu_{FS_k(H)} + trace(\delta H) \\
\end{eqnarray*}
where $\lbrace s_i\rbrace$ is an orthonormal basis of $H$.
As
$$
\delta FS_k(\delta H)=-\frac{1}{k} \sum_{i,j} \delta H_{i,j}\cdot (s_i,s_j)_{FS_k(H)}
$$
We obtain
$$
(\nabla Z_k^{\sigma})_{i,j}(H) = -k^n \int_X (1+\frac{\Delta_{FS_k(H)}}{k})e^{\psi(FS_k(H))} (s_i,s_j)_{FS_k(H)} d\mu_{FS_k(H)} + \ep_{i,j}
$$
with $\ep_{i,j}=1$ if $i=j$ and $0$ in the other cases. With no restriction we can assume $[\nabla Z_k^{\sigma}]$ to be diagonal. Then evaluate at $H_k$:

\begin{equation}
\label{eq:Z}
(\nabla Z_k^{\sigma})_{i,i}(H_k) = -k^n \int_X (1+\frac{\Delta_{\phi_k}}{k})e^{\psi(\phi_k)} \vert s_i\vert^2_{\phi_k} d\mu_{\phi_k} + 1
\end{equation}

\noindent From the expansion of Bergman kernel in Theorem~\ref{theo:bergman} we deduce
\begin{equation}
\label{eq:exp1}
\vert s_i \vert^2_{\phi_k}=k^{-n}\vert s_i \vert^2_{\phi}(
1-\frac{S(\phi)}{2k}+\cO(k^{-2}))
\end{equation}
and
\begin{equation}
\label{eq:exp2}
\om_{\phi_k}=\om_{\phi}(1+\cO(k^{-2})).
\end{equation}

\noindent From Lemma~\ref{lem:exppsi} we also have the uniform expansion :

\begin{equation}
\label{eq:exp3}
\psi_k(\phi)=\frac{\Pi_{\phi}^G S(\phi)+\uS}{2k}+\cO(k^{-1})
\end{equation}

\noindent Then, Equations (\ref{eq:exp1}), (\ref{eq:exp2}) and (\ref{eq:exp3}) together with (\ref{eq:Z}) imply
$$
(\nabla Z_k^{\sigma})_{i,i}(H_k) = 1 - \int_X \vert s_i\vert _{\phi}^2 d\mu_{\phi} + \frac{1}{2k} \int_X S^G(\phi) \vert s_i \vert_{\phi}^2 d\mu_{\phi} + \cO(k^{-1}).
$$
As 
$$
\int_X  \vert s_i \vert_{\phi}^2 d\mu_{\phi}=1
$$
we end with
$$
(\nabla Z_k^{\sigma})_{i,i}(H_k) =\frac{1}{2k} \int_X S^G(\phi) \vert s_i \vert_{\phi}^2 d\mu_{\phi} + \cO(k^{-1}).
$$
Then the proof follows from the following fact (see \cite[Rmk. 3.3]{cs}) :
$$
\lim_{k\rightarrow \infty} k^{-n} \sum_{i,j}\vert \int_X (s_i,s_j)_{\phi} \psi d\mu_{\phi} \vert^2=\int_X \psi^2 d\mu_{\phi}
$$
for any $\psi\in \cH^G$.
\end{proof}


\section{Invariant Deformations}
\label{sec:test}

In this section we give the requisite definitions and terminology necessary to prove Theorem \ref{theo:defo}. In particular we will discuss deformations of polarized complex manifolds that are invariant under a compact group action. We will consider $M$ to denote a smooth real manifold, and $X$ to denote the complex manifold $(M,J)$ when $M$ is equipped with an integrable complex structure $J$.  We also assume that the K\"ahler structure comes from a polarization $L\rightarrow X$.


\begin{definition}
\label{def:deformation}
Let $(X,L)$ be a polarized complex manifold. A {\it polarized deformation} of $(X,L)$ is a triple of complex manifolds $(\cB,\cX,\cL)$, with a fixed point $t_0\in\mathcal{B}$, together with holomorphic maps $\cL\rightarrow \cX \stackrel{\pi}{\rightarrow} \cB$ such that
\begin{itemize}
\item $\pi :\cX \rightarrow \cB$ is a proper submersion,
\item $\cL$ is a holomophic line bundle over $\cX$ so that the restriction $L_t$ to the fibre $X_t=\pi^{-1}(t)$ is ample,
\item $(X,L)$ is isomorphic to $(X_{t_0},L_{t_0})$.
\end{itemize}
Given a compact Lie group $G$, the deformation is $G$-\emph{invariant} if $\cL$ and $\cX$ are acted upon by $G$, compatibly with the projection $\cL\to\cX$ and inducing the identity action on $\cB$.
\end{definition}

 We will also consider test-configurations:


\begin{definition}
\label{def:testconf}
Let $(X,L)$ be a polarized complex manifold. A {\it smooth test-configuration} for $(X,L)$ is given by the following data :
\begin{itemize}
\item a proper holomorphic submersion $\cX_T\stackrel{\pi}{\rightarrow}\C$ and a line bundle $\cL_T\to\cX_T$ that restricts to an ample bundle $\cL_z$ on each fibre $\cX_z=\pi^{-1}(z)$,
\item for each $z\neq 0\in\C$, the pair $(\mathcal{X}_z,\cL_z)$ is isomorphic to $(X,L)$,
\item the group $\C^*$ acts on the pair $(\cX_T,\cL_T)$ so as to induce the $\C^*$-action by multiplication on $\C$.
\end{itemize}
Given a compact Lie group $G$, we will say that the test configuration is {\it $G$-invariant} if it is a $G$-invariant deformation of its central fiber and the $G$-action commutes with the $\C^*$-action.
\end{definition}

As in previous sections, if $G^m$ is a maximal connected compact subgroup of reduced automorphisms of the polarized variety $(X,L)$, we denote by $V^{G^m}$ the extremal vector field on $X$, for the K\"ahler class $c_1(L)$ and the group $G^m$.
 If $X_t$ is a fibre of a deformation, 
 for $t\in \mathcal{B}$, we denote a maximal compact connected subgroup of $Aut(X_t,L_t)$ by $G^m_t$.

 We wish to prove, following Sz\'ekelyhidi \cite{sz10} and Rollin and the second author \cite{rt}, that if $(X,L)$ admits an extremal metric in $c_1(L)$, then any nearby fibre in a invariant deformation of $(X,L)$ can be taken to be the generic fibre of an invariant test configuration, the central fibre of which is also extremal.

Let $(\cB,\cX,\cL)$ be a polarized deformation of the smooth polarized variety $(X,L)$.
The fibration $\mathcal{X}\to \mathcal{B}$ is smoothly trivial 
so there exists a diffeomorphism (perhaps for smaller $\mathcal{B}$),
\begin{eqnarray*}
F:M\times \mathcal{B}\to \mathcal{X}
\end{eqnarray*}
with respect to which the deformations can be considered a family $(M,J_t)$ for $t\in \mathcal{B}$. Also, the group action of $G$ on $\mathcal{X}$ can be considered a map, for $t\in\mathcal{B}$, 
\begin{eqnarray*}
\sigma_t:G\times M\to M.
\end{eqnarray*}
By a theorem of Palais and Stewart (see \cite{ps}) on the rigidity of compact group actions, there exists a smooth family of diffeomorphisms $f_t$ such that 
\begin{eqnarray*}
f_t(\sigma_t(g,f_t^{-1}(x)))=\sigma_0(g,x).
\end{eqnarray*}
That is, we can amend $F$ so that the action of $G$ on $\mathcal{X}$ can be considered an action on $M$ that is independent of $t$. By also adjusting the complex structures $J_t$ by a diffeomorphism, we can suppose that the action is holomophic with respect to each complex structure $J_t$. We can also suppose that $L_t$ is a complex line bundle on $M$ with $c_1(L_t)=c_1(L_0)$ fixed. $J_t$ is a $G$-invariant complex structure on $M$, compatible with K\"ahler form $\omega_t\in c_1(L_t)$. By Moser's theorem, $\omega_t$ is equivalent, by some $G$-invariant diffeomorphism, to $\omega_0$. We can then suppose that all complex structures $J_t$ are compatible with a fixed symplectic form $\omega$ on $M$. 

The deformation can then be considered a smooth map from $\mathcal{B}$ to the set $\mathcal{J}^G$ of  G-invariant almost-complex structures on $M$ that are compatible with the fixed symplectic form $\omega$. 


We first recall that the hermitian scalar curvature $S(J)$ of the metric $g=(\omega,J)$, for $J\in\mathcal{J}^G$, is given by the trace with respect to $\omega$ of the curvature of the Chern connection on the anti-canonical bundle $K^*_X$. We suppose that $G$ acts by hamiltonian diffeomorphisms of $(M,\omega)$. The reduced scalar curvature of the hermitian metric $g=(\omega,J)$, with respect to the group $G$, is given by
\begin{eqnarray*}
S^G(J)=S(J)-\Pi^G_\omega\left( S(J)\right)
\end{eqnarray*}
where we recall the projection $\Pi^G_\omega$ on the space of hamiltonian killing fields is defined in Section \ref{sec:modified}.

 
 Let $H$ be the group of hamiltonian diffeomorphisms of $(M,\omega)$, let $Z(G,H)$ be the centraliser of $G$ in $H$ and let $\mathcal{G}=Z(G,H)/(Z(G,H)\cap G)$ be the quotient by the centre of $G$. 
 The set $\mathcal{J}^G$ admits the structure of an infinite dimensional K\"ahler manifold \cite{fuj} and $\mathcal{G}$ acts on $\mathcal{J}^G$ by automorphisms of this structure.We denote by $\Omega$ the K\"ahler form on $\mathcal{J}^G$. 
 By considering hamiltonian potentials, the Lie algebra of $\mathcal{G}$ can be identified with those smooth $G$-invariant functions of $\omega$-mean equal to $0$, that are $L^2$-orthogonal to $P^G_\omega$.   This can be identified by $L^2$-inner product with its dual space. 

\begin{theorem} \emph{\cite{fuj,Don97,gbook}}
The action of $\mathcal{G}$ on $\mathcal{J}^G$ is hamiltonian, and admits a $\mathcal{G}$-equivariant moment map given by
\begin{eqnarray*}
\mu^G:\mathcal{J}^G& \to & C^\infty_0(M,\R)^G\\
J &\mapsto & S^G(J).
\end{eqnarray*}
\end{theorem}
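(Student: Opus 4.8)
The plan is to realise $\mu^G$ as the reduction of the classical Fujiki--Donaldson moment map to the fixed-point locus $\mathcal{J}^G$ and to the commuting subgroup $Z(G,H)$, and then to descend to the quotient $\mathcal{G}$. Recall that for the full group $H=\mathrm{Ham}(M,\omega)$ acting on the space $\mathcal{J}$ of all $\omega$-compatible almost-complex structures, Fujiki and Donaldson proved that the action preserves the K\"ahler structure $(\Omega,\mathbb{J})$ and is hamiltonian, with moment map $\mu:J\mapsto S(J)-\uS$, once $\mathrm{Lie}(H)$ is identified with $C^\infty_0(M,\R)$ via hamiltonian potentials and this space is identified with its dual by the $L^2$ pairing. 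This is the content of the cited references \emph{\cite{fuj,Don97,gbook}}, and I would take it as the analytic input; the genuinely differential-geometric computation, that the linearisation of $J\mapsto S(J)$ pairs with $\Omega$ in the required way, is exactly what those references establish.

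First I would observe that $\mathcal{J}^G$ is the fixed-point set of the induced action of the compact group $G$ on $\mathcal{J}$, and that $G$ acts by holomorphic isometries of $(\Omega,\mathbb{J})$. Hence $\mathcal{J}^G$ is a totally geodesic complex submanifold, and in particular $\Omega$ restricts to a symplectic (indeed K\"ahler) form on it. Next, since $Z(G,H)$ commutes with $G$, it preserves $\mathcal{J}^G$ and acts on it by symplectomorphisms of $\Omega|_{\mathcal{J}^G}$; this is the action whose hamiltonian nature we must verify.

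The second step is the reduction of the moment map. By the restriction principle, the $Z(G,H)$-action on $\mathcal{J}^G$ is hamiltonian with moment map obtained from $\mu|_{\mathcal{J}^G}$ by composing with the dual of the inclusion $\mathrm{Lie}(Z(G,H))\hookrightarrow \mathrm{Lie}(H)$; under the $L^2$ identifications this is simply $L^2$-projection onto $\mathrm{Lie}(Z(G,H))=C^\infty_0(M,\R)^G$. Since $J\in\mathcal{J}^G$ forces $S(J)$ to be $G$-invariant, this projection leaves $S(J)-\uS$ unchanged, so the $Z(G,H)$-moment map is again $J\mapsto S(J)-\uS$, now viewed in $C^\infty_0(M,\R)^G$. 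Finally, the subgroup $Z(G,H)\cap G$ acts trivially on $\mathcal{J}^G$ (every $J\in\mathcal{J}^G$ is $G$-fixed), so the moment map descends to $\mathcal{G}=Z(G,H)/(Z(G,H)\cap G)$ once we further project onto the annihilator of $\mathrm{Lie}(Z(G,H)\cap G)$. The corresponding $L^2$-projection subtracts $\Pi^G_\omega(S(J))$, sending the Fujiki--Donaldson moment map to $S^G(J)$, the additive constant being fixed by the requirement that the image lie in $C^\infty_0(M,\R)^G$. This produces $\mu^G(J)=S^G(J)$ with values in $\mathrm{Lie}(\mathcal{G})\cong (P^G_\omega)^\perp\cap C^\infty_0(M,\R)^G$, and $\mathcal{G}$-equivariance is inherited from the $H$-equivariance of $\mu$, since restriction to a fixed submanifold and to a commuting subgroup, followed by the equivariant projection onto $\mathrm{Lie}(\mathcal{G})^*$, preserves equivariance.

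The main obstacle is not any of these formal reduction steps but the identification of the projected subspace, which is where the relative setting genuinely differs from the absolute one. One must check that $P^G_\omega\cap C^\infty_0(M,\R)^G$ coincides with the potentials of the centre $Z(G,H)\cap G$ --- equivalently, that a Killing potential for $\xi\in\mathfrak{g}$ is $G$-invariant precisely when $\xi$ is central --- and that, for a $G$-invariant function, $L^2$-orthogonality to this subspace of central potentials is the same as orthogonality to all of $P^G_\omega$, which one sees by averaging the pairing over $G$ and replacing $\xi$ by its $G$-average in the centre. Granting these two points, the theorem follows from the Fujiki--Donaldson theorem by the reduction outlined above.
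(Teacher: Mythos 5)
Your proposal is correct, but it cannot be said to follow the paper's argument, because the paper offers none: the theorem is simply quoted from Fujiki, Donaldson and Gauduchon, where the relative statement is obtained directly by linearising $J\mapsto S(J)$ and pairing with the K\"ahler form on $\mathcal{J}^G$. What you do instead is a genuine alternative: deduce the relative statement formally from the absolute Fujiki--Donaldson theorem by restricting to the fixed-point locus $\mathcal{J}^G$ (a complex, hence symplectic, submanifold), restricting the group to $Z(G,H)$ (whose Lie algebra is exactly $C^\infty_0(M,\R)^G$, so the dual-of-inclusion projection fixes the already $G$-invariant function $S(J)-\uS$), and then descending to $\mathcal{G}$. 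The two points you isolate at the end are indeed the only nontrivial ones, and your arguments for them work: a potential $f_\xi$, $\xi\in\mathfrak{g}$, is $G$-invariant iff $\xi$ is central, and for $G$-invariant $h$ the pairing $\xi\mapsto\int_M h f_\xi\,\omega^n/n!$ is $\mathrm{Ad}$-invariant, hence equals its value at the central $G$-average $\bar\xi$; consequently, on $G$-invariant functions $\Pi^G_\omega$ coincides with projection onto central potentials. One step deserves to be made explicit: projecting a moment map onto the annihilator of a subalgebra is not legitimate in general; here it is, precisely because $Z(G,H)\cap G$ acts trivially on $\mathcal{J}^G$ (so induced vector fields depend only on classes modulo its Lie algebra) and because $Z(G,H)$ fixes the central potentials pointwise (so the projection is equivariant). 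Note also that your derivation produces $S(J)-\uS-\Pi^G_\omega(S(J))$, the mean-zero normalisation consistent with the paper's Section 2 definition and with the stated target $C^\infty_0(M,\R)^G$; the formula in Section 4 of the paper omits $\uS$. The trade-off of your route is modularity: the hard analysis is invoked once, in the absolute case, and the relative case follows by soft symplectic reduction, whereas the cited sources prove the relative statement directly.
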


For $J\in\mathcal{J}^G$, the tangent space to $\mathcal{J}^G$ at $J$ is given by 
\begin{eqnarray*}
T_J\mathcal{J}^G=\{ \alpha\in\Omega^{0,1}(T^{1,0}X)^G\ ; \ \omega(\alpha(\cdot),\cdot)+\omega(\cdot,\alpha(\cdot))=0\}.
\end{eqnarray*}

The set of infinitesimal $G$-invariant deformations is given by the kernel of the operator
$\bar\partial:\Omega^{0,1}(T^{1,0})^G\to \Omega^{0,2}(T^{1,0})^G$.
The infinitesimal action of $\mathcal{G}$ at $J$ is given by 
\begin{eqnarray*}
P:C^\infty_0(X,\R)^G&\to & \Omega^{0,1}(T^{1,0}X)^G,\\
f&\mapsto & \bar\partial v_f^{(1,0)}
\end{eqnarray*}
where $v_f^{(1,0)}$ is the $(1,0)$-part of the hamiltonian vector field associated to the potential $f$. Following \cite{Don97} we consider the complexified orbits of the $\mathcal{G}$-action on $\mathcal{J}^G$. The operator $P$ can be complexified, so as to obtain $P:C^\infty_0(X,\C)^G\to\Omega^{0,1}(T^{1,0}X)^G$. Together these operators define an elliptic complex 
\begin{eqnarray*}
C^\infty_0(X,\C)^G \stackrel{P}{\longrightarrow} \Omega^{0,1}(T^{1,0})^G \stackrel{\bar\partial}{\longrightarrow} \Omega^{0,2}(T^{1,0})^G.
\end{eqnarray*}

 The images of the operator $P$, as $J$ varies, define an integrable distribution on $\mathcal{J}^G$, and the maximal integral submanifolds are the complexified orbits of the action of $\mathcal{G}$. 
 The complexified orbits have particular relevance because (see \cite{Don97}) if $J$ and $J^\prime$ lie in the same complexified orbit, and $J$ is integrable, then the pair $(\omega, J^\prime)$ is equivalent, via some diffeomorphism, to $(\omega +i\partial\bar\partial \psi, J)$. 
 
From this point we fix an integrable complex structure $J_0$ such that $X=(M,J_0)$ and suppose that $\mu^G(J_0)=S^G(J_0)=0$. That is, $g_0=(\omega,J_0)$ defines an extremal K\"ahler metric.  
 We consider the finite dimensional subspace that is transverse to the complexified orbit of $J_0$
\begin{eqnarray*}
H^1_G=\{\alpha\in\Omega^{0,1}(T^{1,0})^G\ ;\ P^*\alpha=0,\ \bar\partial\alpha=0\}.
\end{eqnarray*}
 Let $K\subseteq \mathcal{G}$ be the connected component of the identity of the stabilizer subgroup for the element $J_0\in \mathcal{J}^G$.  $K$ is a compact Lie group and acts complex linearly on the vector space $H^1_G$. The complexification $K^\C$ of the group $K$ also acts on $H^1_G$.

We can recall a result from \cite{sz10}.
\begin{proposition}\label{prop:slice}
There is a ball centered at the origin $B\subseteq H^1_G$ and a $K$-equivariant map $\Phi:B\to \mathcal{J}^G$ such that :
\begin{itemize}
\item $\Phi(0)=J_0$,
\item the complexified orbit of every integrable complex structure $J\in\mathcal{J}^G$ close to $J_0$ intersects the image of $\Phi$, 
\item $\Phi$ is $K^\C$-equivariant in the sense that if $x$ and $x^\prime$ are in the same $K^\C$-orbit and $\Phi(x)$ is integrable, then $\Phi(x)$ and $\Phi(x^\prime)$ lie in the same $\mathcal{G}^\C$-orbits in $\mathcal{J}^G$,
\item the moment map $\mu^G=S^G$ takes values in $\mathfrak{k}\subseteq Lie(\mathcal{G}) $ along the image of $\Phi$,
\item $\Phi^*\Omega$ is a symplectic form on $B$.
\end{itemize}
\end{proposition}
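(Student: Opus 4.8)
The plan is to realise $\Phi$ as a Kuranishi-type slice for the complexified action of $\mathcal{G}$ on $\mathcal{J}^G$ through the moment-map zero $J_0$, carried out equivariantly with respect to the compact stabiliser $K$, following Sz\'ekelyhidi \cite{sz10}. First I would set up local coordinates on $\mathcal{J}^G$ near $J_0$: using $J_0$ to split the complexified tangent bundle, every $G$-invariant almost complex structure close to $J_0$ is $J_\alpha$ for a unique small $\alpha\in\Omega^{0,1}(T^{1,0}X)^G$, with $\alpha$ integrable exactly when $\bar\partial\alpha+\tfrac12[\alpha,\alpha]=0$. The elliptic complex
\[
C^\infty_0(X,\C)^G\stackrel{P}{\longrightarrow}\Omega^{0,1}(T^{1,0})^G\stackrel{\bar\partial}{\longrightarrow}\Omega^{0,2}(T^{1,0})^G
\]
and its adjoints give a Hodge decomposition $\Omega^{0,1}(T^{1,0})^G=\operatorname{im}P\oplus H^1_G\oplus\operatorname{im}\bar\partial^*$, so that $H^1_G=\ker P^*\cap\ker\bar\partial$ is the harmonic complement to the (complexified) orbit directions.

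Next I would construct $\Phi$ by solving the Kuranishi equation
\[
\alpha(x)=x-\tfrac12\,\bar\partial^*\,\mathsf{G}\,[\alpha(x),\alpha(x)],\qquad x\in H^1_G,
\]
with $\mathsf{G}$ the Green operator of the complex, via the implicit function theorem on a small ball $B\subseteq H^1_G$; this yields a smooth family with $\alpha(0)=0$ and $d\alpha_0=\mathrm{id}$, and setting $\Phi(x)=J_{\alpha(x)}$ produces a slice transverse to the $\mathcal{G}^\C$-orbits, since the gauge $\bar\partial^*\alpha(x)=0$ removes the $\operatorname{im}\bar\partial^*$ component while the harmonic part carries no $\operatorname{im}P$ component. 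As $K$ fixes $J_0$ and commutes with $P$, $\bar\partial$, their adjoints and $\mathsf{G}$, the construction is $K$-equivariant (averaging the initial chart over the compact $K$ if necessary). The second bullet is then just the slice property supplied by the Hodge decomposition, and the $K^\C$-equivariance of the third bullet reflects that the linear $K^\C$-action on $H^1_G$ is the restriction of the complexified $\mathcal{G}^\C$-action tangent to the slice, so that $K^\C$-orbits of integrable points of $\Phi(B)$ stay inside single $\mathcal{G}^\C$-orbits.

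It remains to verify the last two bullets. For $\Phi^*\Omega$ I would note that $d\Phi_0$ is the inclusion of $H^1_G$ into $T_{J_0}\mathcal{J}^G$, and $H^1_G$ is a complex, hence $\Omega$-nondegenerate, subspace; since nondegeneracy is open, $\Phi^*\Omega$ stays symplectic after shrinking $B$. For the moment-map condition I would fix a $K$-invariant splitting $\mathrm{Lie}(\mathcal{G})=\mathfrak{k}\oplus\mathfrak{m}$; since $S^G(J_0)=0$ and the differential of $S^G$ is nondegenerate transverse to the orbit (the infinitesimal action $P$ is injective on $\mathfrak{m}$), a further implicit-function-theorem correction of $\Phi$ along the complexified orbit directions forces $\Pi_\mathfrak{m}(S^G)\equiv0$ on the image, i.e. $\mu^G=S^G$ takes values in $\mathfrak{k}\subseteq\mathrm{Lie}(\mathcal{G})$ along $\Phi(B)$.

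The hard part will be exactly this last point: arranging that $S^G$ lands in $\mathfrak{k}$ along the entire image of $\Phi$ while simultaneously preserving $K$-equivariance and transversality to the \emph{complexified} orbits. This couples the nonlinear Kuranishi gauge-fixing to the moment-map geometry, and it is here that the compactness of $K$ and the precise Hodge-theoretic splitting of the orbit directions into $\operatorname{im}P$, $H^1_G$ and $\operatorname{im}\bar\partial^*$ are essential.
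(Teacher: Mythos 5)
The paper gives no proof of this proposition at all --- it is recalled verbatim from Sz\'ekelyhidi \cite{sz10}, the only feature of the proof used later being that $\Phi$ arises as a deformation, along complexified orbits, of a holomorphic $K^\C$-equivariant Kuranishi map $\Phi_1$ --- and your sketch reconstructs essentially that cited argument: the Kuranishi equation for the elliptic complex built from $P$ and $\bar\partial$, $K$-equivariance from compactness of $K$, and a final implicit-function-theorem correction along the $\mathcal{G}^\C$-orbit directions forcing $\mu^G$ to take values in $\mathfrak{k}$, which you rightly identify as the delicate step. One small slip: the gauge your Kuranishi equation actually enforces is $P^*\alpha(x)=0$ (both $x\in H^1_G$ and the correction term, which lies in $\mathrm{im}\,\bar\partial^*\subseteq\ker P^*$, are orthogonal to $\mathrm{im}\,P$), i.e.\ transversality to the $\mathcal{G}^\C$-orbit directions; the solution does not satisfy $\bar\partial^*\alpha(x)=0$ and its $\mathrm{im}\,\bar\partial^*$ component is not removed, nor does it need to be.
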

Then, since $\Phi$ is $K$-equivariant, $\mu=\Phi^*S^G$ defines a moment map on $B$ for the action of $K$. With this reduction of the problem of finding extremal metrics to a finite dimensional problem, one can more directly apply the ideas of geometric invariant theory. 

\begin{proposition}\label{prop:polyst}
\emph{\cite{sz10}} Let $x\in B$ be polystable for the action of $K^\C$ on $H^1_G$. Then there exists $x^\prime\in B$ in the $K^\C$-orbit of $x$ such that $\mu(x)=S^G(\Phi(x))=0$.
\end{proposition}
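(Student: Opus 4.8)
The plan is to invoke the finite-dimensional Kempf--Ness theory for the linear action of the reductive group $K^\C$ on the vector space $H^1_G$, organized around the moment map $\mu=\Phi^*S^G$ supplied by Proposition~\ref{prop:slice}. The assertion is precisely that a point with closed $K^\C$-orbit carries a zero of the moment map in its orbit, and I would produce such a zero by minimizing a Kempf--Ness functional along the orbit.

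First I would fix $x\in B$ and restrict attention to the $K^\C$-orbit through $x$, parametrized by $g\mapsto g\cdot x$ for $g\in K^\C$. Since the stabilizer of the extremal structure is, up to identity component, the group $K$, this orbit is modelled on the symmetric space $K^\C/K$, whose geodesics are the curves $t\mapsto\exp(it\xi)\cdot x$ with $\xi\in\mathfrak{k}$. I would introduce the Kempf--Ness functional $\Psi_x:K^\C/K\to\R$ characterized by the first-variation identity $\frac{d}{dt}\Psi_x(\exp(it\xi)\cdot x)=\langle\mu(\exp(it\xi)\cdot x),\xi\rangle$, so that $\mu$ plays the role of the gradient of $\Psi_x$ and the critical points of $\Psi_x$ are exactly the orbit points $g\cdot x$ with $\mu(g\cdot x)=0$.

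The two analytic inputs I would establish are convexity and properness. Convexity of $\Psi_x$ along the geodesics $t\mapsto\exp(it\xi)\cdot x$ is the moment-map monotonicity: differentiating the first-variation identity, the second derivative equals the squared $\Phi^*\Omega$-norm of the vector field generating the $\xi$-action, hence is non-negative. Properness is where the polystability hypothesis enters, namely a closed $K^\C$-orbit forces $\Psi_x$ to be coercive on $K^\C/K$. Granting both facts, $\Psi_x$ attains its infimum at some $g_0K$, and by the first-variation identity the minimizer $x'=g_0\cdot x$ is a critical point, so that $\mu(x')=S^G(\Phi(x'))=0$, which is the desired conclusion (the orbit point is $x'$, not $x$).

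The main obstacle is deducing properness from the closed-orbit hypothesis, since convexity alone yields uniqueness of the critical point modulo $K$ but not its existence. The standard argument is by contradiction: were $\Psi_x$ not coercive, convexity would produce a direction $\xi\in\mathfrak{k}$ along which $\Psi_x(\exp(it\xi)\cdot x)$ stays bounded as $t\to+\infty$; then $\exp(it\xi)\cdot x$ would remain bounded and degenerate to a limit lying in the closure but not in the orbit of $x$, contradicting closedness. Making this precise requires relating the linear norm on $H^1_G$, which governs orbit closedness, to the intrinsic geometry of the orbit in $(B,\Phi^*\Omega)$; it is here that the finite-dimensionality of $H^1_G$ and the linearity of the $K^\C$-action are indispensable, and this is the content extracted from \cite{sz10}.
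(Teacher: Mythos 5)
The paper never proves this proposition: it is quoted verbatim from \cite{sz10}, so there is no internal argument to compare yours against line by line. Judged as a standalone proof, your Kempf--Ness strategy is the right one in spirit, but as written it has two genuine gaps. The first is a domain problem. The moment map $\mu=\Phi^*S^G$ exists only on the ball $B\subseteq H^1_G$, so your functional $\Psi_x$ is defined only on the part of $K^\C/K$ whose orbit points land in $B$. The geodesics $t\mapsto\exp(it\xi)\cdot x$ will in general leave $B$, so neither your convexity statement nor the minimization of $\Psi_x$ over $K^\C/K$ makes sense as stated. This is not a technicality: the content of the proposition is precisely that a zero of $\mu$ can be found at a point of the orbit lying \emph{inside} $B$, so any proof must control where in the orbit the minimizer sits (for the flat moment map of the linear action this is automatic, since the classical Kempf--Ness theorem produces a minimal-norm point $x'$ in the closed orbit with $\|x'\|\le\|x\|$, hence $x'\in B$; for $\mu$ it is not).

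The second gap is the properness step, which you correctly identify as the crux but do not prove. For the flat moment map of the linear $K^\C$-action, ``closed orbit implies proper Kempf--Ness functional'' is classical, because there the functional is $\log\|g\cdot x\|$ and its boundedness along a ray directly controls the norm along that ray. For the nonstandard $\mu=\Phi^*S^G$ there is no a priori relation between boundedness of $\Psi_x$ along $t\mapsto\exp(it\xi)\cdot x$ and boundedness of that ray in $H^1_G$, so your contradiction argument (``$\Psi_x$ bounded $\Rightarrow$ ray bounded $\Rightarrow$ limit point in the closure but not the orbit'') does not go through without exactly the comparison you defer. Since the proposition under review \emph{is} the statement extracted from \cite{sz10}, deferring this step to that same citation makes the attempt circular: what remains to be supplied --- relating the flat structure governing polystability to the perturbed moment map $\mu$ on the slice --- is the entire mathematical content of the result.
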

That is, we have an explicit algebraic criterion for when nearby complex orbits also contain extremal metrics.\\

We now turn to the construction of a $G$-invariant test configuration, as in Proposition \ref{prop:test}.

\begin{proposition}\label{prop:TCexistence}
Let $(X, L)$ be a polarised manifold that admits an extremal metric in the K\"ahler class $c_1(L)$. Let $\mathcal{L}\to\mathcal{X}\to\mathcal{B}$ be a $G$-invariant deformation of $(X,L)=(X_{t_0},L_{t_0})$ where $G$ is a compact connected Lie group that acts on the fibres by reduced automorphisms and such that $JV^{G^m}$, for some maximal subgroup $G^m$ that contains $G$, is infinitesimally generated by $G$. 

Then for every $t\in\mathcal{B}$ sufficiently close to $t_0$ there exists a $G$-invariant test configuration $\mathcal{L}_T\to \mathcal{X}_T\to \C$ with generic fibre isomorphic to $(X_t,L_t)$ such that the central fibre $(\cX_0,\cL_0)$ admits a $G$-invariant extremal metric in the K\"ahler class $c_1(\cL_0)$. The vector field $J_0V^{G^m_0}$ on the central fibre of $\cX_T$ also lies in $Lie(G)$. 
\end{proposition}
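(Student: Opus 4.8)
The plan is to follow Sz\'ekelyhidi \cite{sz10} and Rollin and the second author \cite{rt}, reducing the construction of the test configuration to finite dimensional geometric invariant theory on the Kuranishi slice $H^1_G$. Recall that the hypothesis that $JV^{G^m}$ be infinitesimally generated by $G$ is exactly what forces $S^G(J_0)=0$, so that $J_0$ is a zero of the moment map $\mu^G$ and the reduction of Proposition~\ref{prop:slice} applies at $J_0$.

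First I would realise the nearby fibre as a point of the slice. For $t$ close to $t_0$ the integrable complex structure $J_t$ defining $X_t$ lies near $J_0$, so by Proposition~\ref{prop:slice} its $\cG^\C$-orbit meets the image of $\Phi$; hence there is $x\in B\subseteq H^1_G$ with $\Phi(x)$ in the same $\cG^\C$-orbit as $J_t$, i.e. $(\omega,\Phi(x))$ is isomorphic to $(X_t,L_t)$ up to a diffeomorphism and a K\"ahler potential. Next I would apply GIT to the linear action of $K^\C$ on the finite dimensional space $H^1_G$. Since $K$ is compact, $K^\C$ is reductive, so the closure of $K^\C\cdot x$ contains a unique closed orbit; by the Hilbert--Mumford and Kempf theory there is a one-parameter subgroup $\lambda:\C^*\to K^\C$ with $x_0:=\lim_{s\to 0}\lambda(s)\cdot x$ lying in that closed orbit, hence polystable. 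Decomposing $x$ into weight spaces of $\lambda$, the limit $x_0$ is the sum of the zero-weight components, so $\Vert x_0\Vert\leq\Vert x\Vert$ and $x_0\in B$.

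Then I would assemble the test configuration. Because $\lambda\subseteq K^\C\subseteq\cG^\C$ and $\cG$ centralises $G$, the $\C^*$-action induced by $\lambda$ commutes with $G$ and preserves $\cJ^G$; realising the family $s\mapsto\Phi(\lambda(s)\cdot x)$ over $\C^*$ together with its limit $\Phi(x_0)$ over $0$ as a polarised family over $\C$ — through the projective embeddings by $L_t^{\otimes k}$ on which $\lambda$ acts linearly — produces a $G$-invariant smooth test configuration $\cL_T\to\cX_T\to\C$ with $\C^*$-action generated by $\lambda$. All its fibres over $\C^*$ lie in a single $\cG^\C$-orbit and so are isomorphic to $(X_t,L_t)$, while the central fibre is $(\omega,\Phi(x_0))$. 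Since $x_0$ is polystable, Proposition~\ref{prop:polyst} yields $x_0'\in B$ in the $K^\C$-orbit of $x_0$ with $S^G(\Phi(x_0'))=0$; as $x_0'$ and $x_0$ share a $\cG^\C$-orbit, $(\cX_0,\cL_0)$ carries a $G$-invariant extremal metric. For the extremal vector field, $S^G(\Phi(x_0'))=0$ forces $S-\uS=\Pi^G_\omega S$, so the extremal vector field $\nabla(\Pi^G_\omega S)=\nabla S$ lies in $J\g$; choosing a maximal $G^m_0\supseteq G$ it coincides with $V^{G^m_0}$, whence $J_0V^{G^m_0}\in Lie(G)$.

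The main obstacle is the passage from the abstract orbit degeneration $\lambda(s)\cdot x\to x_0$ in $H^1_G$ to an honest smooth polarised test configuration over $\C$: one must verify flatness, smoothness of the total space near the central fibre, and the existence of a relative polarisation $\cL_T$ compatible with the $\C^*$-action. This is precisely the technical content we import from \cite{sz10} and \cite{rt}, with the $K^\C$-equivariance clause of Proposition~\ref{prop:slice} ensuring that the whole construction descends $G$-equivariantly.
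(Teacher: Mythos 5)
Your outline — put $J_t$ in the Kuranishi slice, degenerate along a one-parameter subgroup of $K^\C$ to a polystable limit $x_0\in B$, and use Proposition~\ref{prop:polyst} to make the central fibre extremal, with the final observation that $S^G=0$ forces $J_0V^{G^m_0}\in Lie(G)$ — is exactly the paper's strategy, and those parts of your argument are sound. The genuine gap is that you never construct the test configuration: you defer \emph{flatness, smoothness of the total space, and the relative polarisation} to \cite{sz10} and \cite{rt}, but that construction \emph{is} the content of the proposition (and of the paper's proof); citing it is circular. Concretely, two things go wrong in your sketch of the assembly. First, the family $s\mapsto\Phi(\lambda(s)\cdot x)$ is not holomorphic in $s$: the slice map $\Phi$ of Proposition~\ref{prop:slice} is only smooth and $K$-equivariant, its $K^\C$-equivariance being merely the orbit-wise statement. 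The paper instead recalls that $\Phi$ is a deformation along complexified orbits of a genuinely holomorphic, $K^\C$-equivariant map $\Phi_1$, defines the fibrewise complex structures by $z\mapsto \Phi_1(\rho(z)\cdot x)$ on $M\times\Delta$, and then must \emph{check} integrability of the resulting almost complex structure on the total space via the Nijenhuis tensor, which vanishes precisely because $\Phi_1(\rho(\cdot)\cdot x)$ is holomorphic. With $\Phi$ in place of $\Phi_1$ your total space is not even a complex manifold.

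Second, you treat $\lambda$ as though it acted on $M$ and linearised on $L_t$ (this is what your projective-embedding construction of $\cL_T$ presupposes). But $K$ is the stabiliser inside $\mathcal{G}=Z(G,H)/(Z(G,H)\cap G)$, a quotient which does not act on $M$; moreover $\mathcal{G}^\C$ does not exist as a group (only the complexified orbits do), so the inclusion $K^\C\subseteq\mathcal{G}^\C$ you invoke is not meaningful as stated. The paper handles this with Lemma~\ref{lem:action} and Corollary~\ref{cor:1ps}: the one-parameter subgroup $\rho:\C^*\to K^\C$ is lifted to $\tilde\rho:\C^*\to\tilde K^\C$ with $\tilde\rho(S^1)\subseteq\tilde K\subseteq Z(G,H)$ acting by hamiltonian isometries of $(M,\omega,J_0)$; only then does formula~(\ref{eqn:CstAction}) define the $\C^*$-action on $\cX_T=M\times\C$, and only then can one build $\cL_T$ as the pullback of the underlying smooth bundle $\overline{L}$ with an $S^1$-invariant connection satisfying $F^{0,2}_\nabla=0$, so that the holomorphic structure and the equivariant lift to the bundle exist. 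These two points — the holomorphic map $\Phi_1$ with the integrability check, and the lifting of the one-parameter subgroup so that it honestly acts on the manifold and the polarisation — are the substance of the paper's proof and are absent from yours.
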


We first require a short lemma. Recall that $K$ lies in the quotient $ Z(G,H)/(G\cap Z(G,H))$. While $K$ does not act on $M$, we can lift its Lie algebra $\k$ to lie in the Lie algebra of $Z(G,H)$ and so that it gives an infinitesimal action on $M$:

\begin{lemma}
\label{lem:action}
There is a lift of $\k$ to the Lie algebra $\mathfrak{z}$ of $Z(G,H)$ whose image lies in the Lie algebra of hamiltonian Killing vector fields  for $g_0$. 
\end{lemma}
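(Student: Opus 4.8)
The plan is to realise $\k$ concretely as an algebra of Killing potentials and then to upgrade the obvious linear lift into a genuine Lie algebra section by exploiting the reductivity of a compact isometry algebra. First I would identify the stabiliser group inside the automorphisms of $M$. Writing $q\colon Z(G,H)\to\cG$ for the quotient projection, an element $[\phi]$ with $\phi\in Z(G,H)$ fixes $J_0$ exactly when $\phi$ preserves $J_0$; as $\phi$ already preserves $\om$, this means $\phi\in\Isom(M,g_0)$. Since $G$ itself acts holomorphically and hence fixes $J_0$, the quotient ambiguity is irrelevant, so the stabiliser of $J_0$ in $\cG$ equals $q\big(Z(G,H)\cap\Isom(M,g_0)\big)$. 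Letting $\tilde{\k}$ denote the Lie algebra of the identity component of $Z(G,H)\cap\Isom(M,g_0)$, which is compact because $M$ is, one sees that $\tilde{\k}$ is exactly the algebra of $G$-commuting hamiltonian Killing vector fields of $g_0$, and that $q_*\colon\tilde{\k}\to\k$ is surjective. This matches the potential picture: an element of $\k\subseteq Lie(\cG)$ is represented by an $\om$-mean-zero $G$-invariant potential $f$ with $P(f)=\delb v_f^{(1,0)}=0$, so that $v_f$ is symplectic and real-holomorphic, hence Killing for $g_0$.

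Next I would compute the kernel of $q_*$ restricted to $\tilde{\k}$. The kernel of $q_*$ is $\mathfrak{c}:=Lie(Z(G,H)\cap G)$, the centre of $\g$. Because $\om$ and $J_0$ are both $G$-invariant, $G$ acts by isometries of $g_0$, so $\g$, and in particular $\mathfrak{c}$, consists of hamiltonian Killing fields; thus $\mathfrak{c}\subseteq\tilde{\k}$ and $\ker(q_*|_{\tilde{\k}})=\mathfrak{c}$, an ideal of $\tilde{\k}$. This yields a short exact sequence of Lie algebras $0\to\mathfrak{c}\to\tilde{\k}\to\k\to 0$.

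Finally I would split this sequence using compactness. Since the relevant group is compact, $\tilde{\k}$ is reductive; fixing an $\mathrm{Ad}$-invariant inner product, the orthogonal complement $\mathfrak{a}:=\mathfrak{c}^{\perp}$ of the ideal $\mathfrak{c}$ is again an ideal, so $\tilde{\k}=\mathfrak{c}\oplus\mathfrak{a}$ as a direct sum of ideals, and $q_*$ restricts to a Lie algebra isomorphism $\mathfrak{a}\xrightarrow{\sim}\k$. Its inverse $\k\to\mathfrak{a}\subseteq\tilde{\k}\subseteq\fz$ is the required lift, and by construction its image $\mathfrak{a}$ lies in $\tilde{\k}$ and hence consists of hamiltonian Killing vector fields for $g_0$.

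I expect the main obstacle to be producing a lift that is a Lie algebra homomorphism rather than a mere linear section. The naive candidate, representing each class in $Lie(\cG)$ by its $(P^G_\om)^{\perp}$-orthogonal potential, is only linear: the Poisson bracket of two such Killing potentials is again a Killing potential but may acquire a component in $\mathfrak{c}\subseteq P^G_\om$, so the orthogonal representatives fail to be closed under bracket. It is precisely the reductivity of the compact isometry algebra $\tilde{\k}$ that repairs this defect, by supplying a complementary ideal in place of a mere orthogonal complement.
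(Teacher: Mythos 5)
Your proof is correct, but it takes a genuinely different route from the paper's. The paper's argument is a one-liner: identifying $\fz$ with the $G$-invariant mean-zero functions equipped with the Poisson bracket, it observes that $\fz/(\g\cap\fz)$ is isomorphic to the $L^2$-orthogonal complement $(\g\cap\fz)^\perp\subseteq\fz$, and lifts $\k$ by taking orthogonal representatives; the Killing property of the image is left implicit, and follows exactly as in your ``potential picture'' sentence, since $\k=\ker P$ consists of classes whose hamiltonian representatives preserve both $\om$ and $J_0$. You instead identify the stabilizer $K$ geometrically (image of the hamiltonian isometries of $g_0$ commuting with $G$), produce the finite-dimensional exact sequence $0\to\mathfrak{c}\to\tilde{\k}\to\k\to 0$, and split it by a complementary ideal coming from an invariant inner product. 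What your route buys is a lift that is a genuine Lie algebra homomorphism, whereas the paper only claims a linear section with Killing image --- and a linear lift of a single generator is in fact all that Corollary \ref{cor:1ps} uses.

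Two caveats. First, your closing criticism of the ``naive'' orthogonal lift --- which is precisely the paper's construction --- is unfounded: for $f,g$ $G$-invariant and $p$ a potential with $v_p\in\g$, integration by parts (hamiltonian fields preserve $d\mu$) gives
\begin{equation*}
\int_X \{f,g\}\,p\,d\mu \;=\; \int_X (\mathcal{L}_{v_g}f)\,p\,d\mu \;=\; -\int_X f\,(\mathcal{L}_{v_g}p)\,d\mu \;=\; 0,
\end{equation*}
since $\mathcal{L}_{v_g}p=\{p,g\}=-\mathcal{L}_{v_p}g=0$ by $G$-invariance of $g$. So the Poisson bracket of two invariant functions never acquires a component along the potentials of $\g\cap\fz$, and the paper's orthogonal section is itself bracket-preserving; reductivity is not needed to repair anything. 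Second, the compactness of $Z(G,H)\cap\Isom(M,g_0)$ is not immediate (a priori the group of \emph{hamiltonian} isometries need not be closed in $\Isom(M,g_0)$), but this is harmless: $\tilde{\k}$ is a subalgebra of the Lie algebra of the compact group $\Isom(M,g_0)$, hence inherits an $\mathrm{ad}$-invariant inner product by restriction, which is all your splitting argument requires.
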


\begin{proof}
A priori, $\mathfrak{k}$ lies in $\mathfrak{z}/(\mathfrak{g}\cap\mathfrak{z})$, but this space is isomorphic to $(\mathfrak{g}\cap\mathfrak{z})^\perp\subseteq\mathfrak{z}$, where we consider the spaces to be sets of smooth $G$-invariant functions and we take the orthogonal with respect to the $L^2$-norm on functions, equipped with the Poisson bracket. 
\end{proof}

As a corollary, we deduce that each $\C^*$-subgroup of $K$, induces a $\C^*$-action on $X$ by biholomorphisms. Let $\tilde{K}\subseteq Z(G,H)$ be the preimage of $K$ by the projection $\pi: Z(G,H)\to Z(G,H)/(G\cap Z(G,H))$, and let $\tilde{K}^\C$ be its complexification. We note that $\tilde K$ acts on $(M,\omega,J_0)$ by hamiltonian isometries. It then follows (see \cite{Don01,kob}) that $\tilde K$ lifts to act on the bundle $L_0$.

\begin{corollary}
\label{cor:1ps}
Let $\rho:\C^*\to K^\C\subseteq (Z/(G\cap Z))^\C$ be a one-parameter-subgroup. Then there exists a one-parameter subgroup $\tilde\rho:\C^*\to {\tilde K}^\C$ such that
\begin{enumerate}
\item $\tilde\rho|_{S^1}:S^1\to \tilde K$,
\item $\pi\circ\tilde\rho=\rho$.
\end{enumerate}
\end{corollary}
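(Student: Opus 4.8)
The plan is to reduce the statement to a lifting problem for cocharacters through the central extension of compact groups underlying $\pi$, and then to confront the resulting periodicity obstruction. Write $N:=Z(G,H)$ and $C:=G\cap N$. The projection $\pi:N\to\cG=N/C$ restricts to a surjective homomorphism $\pi:\tilde K\to K$ of compact groups with kernel $C$; since every element of $C$ commutes with all of $N$, the subgroup $C$ is central in $\tilde K$, so we have a central extension
\begin{equation*}
1\longrightarrow C \longrightarrow \tilde K \stackrel{\pi}{\longrightarrow} K \longrightarrow 1,
\end{equation*}
and $C=G\cap N$ is exactly the centre of $G$. Complexifying gives $\pi:\tilde K^\C\to K^\C$ with central kernel $C^\C$. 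Now a one-parameter subgroup $\rho:\C^*\to K^\C$ is an algebraic homomorphism whose restriction to $S^1\subseteq\C^*$ has compact image, hence is generated by some $\eta\in\k=Lie(K)$ with $\exp_K(2\pi\eta)=1$, and $\rho$ is recovered as $\rho(z)=\exp_{K^\C}(-i\log(z)\,\eta)$ (single-valued precisely because $\exp_K(2\pi\eta)=1$). Conversely, to build $\tilde\rho$ it suffices to produce $\tilde\eta\in Lie(\tilde K)$ with $d\pi(\tilde\eta)=\eta$ and $\exp_{\tilde K}(2\pi\tilde\eta)=1$: then $\tilde\rho(z):=\exp_{\tilde K^\C}(-i\log(z)\,\tilde\eta)$ is a well-defined algebraic one-parameter subgroup with $\tilde\rho|_{S^1}:S^1\to\tilde K$ and $\pi\circ\tilde\rho=\rho$. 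Thus the whole statement reduces to lifting the infinitesimal generator $\eta$ while preserving its integrality.

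First I would produce a preliminary lift using Lemma~\ref{lem:action}. Since the $L^2$ pairing on hamiltonian potentials is $\mathrm{ad}$-invariant, the central ideal $\mathfrak{c}:=Lie(C)=\g\cap\fz$ of $\fz=Lie(N)$ admits an $\mathrm{ad}$-invariant complement, giving a splitting $\fz=\mathfrak{c}\oplus\mathfrak{c}^\perp$ into ideals and a Lie-algebra section $s:\k\xrightarrow{\sim}\mathfrak{c}^\perp\subseteq\fz$ with $d\pi\circ s=\mathrm{id}$; this is precisely the lift of Lemma~\ref{lem:action}, realising $\eta$ as a hamiltonian Killing field $\tilde\eta_0:=s(\eta)$ of $g_0$ that integrates to a one-parameter subgroup of $\tilde K$. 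Equivalently, fixing a maximal torus $T\subseteq K$ containing the image of $\rho$ and a maximal torus $\tilde T\subseteq\tilde K$ with $\pi(\tilde T)=T$, the task becomes lifting the cocharacter $\eta\in X_*(T)$ through the surjection of tori $\pi:\tilde T\to T$; note that the elements of $C$ lying in the identity component of $\tilde K$ are central there and hence automatically belong to $\tilde T$, so $\ker(\pi|_{\tilde T})=\tilde T\cap C$.

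The main obstacle is the closing-up condition. For any preliminary lift one has $\pi(\exp_{\tilde K}(2\pi\tilde\eta_0))=\exp_K(2\pi\eta)=1$, so $c:=\exp_{\tilde K}(2\pi\tilde\eta_0)\in C$; correcting $\tilde\eta_0$ by a central $w\in\mathfrak{c}$ replaces $c$ by $c\cdot\exp_{\tilde K}(2\pi w)$, and since $\exp(\mathfrak{c})$ is exactly the identity component $C_0$, one can arrange $\exp_{\tilde K}(2\pi\tilde\eta)=1$ precisely when $c\in C_0$. In cocharacter terms, the lift exists iff the class of $\rho$ dies in the finite cokernel of $X_*(\tilde T)\to X_*(T)$, which is identified with $\pi_0(\tilde T\cap C)$, a finite group governed by the component group $\pi_0(C)$ of the centre of $G$. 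I expect dealing with this torsion to be the crux of the argument: the plan is to show the obstruction is absent in the present hamiltonian-isometry setting—either because the relevant kernel $\tilde T\cap C$ is connected, or, exactly as is standard for test configurations, by passing to a sufficiently divisible power $\cL_0^{\otimes d}$ of the polarisation, which multiplies the cocharacter by $d$, clears the finite torsion, and, using that $\tilde K$ lifts to act on $\cL_0$ and hence on $\cL_0^{\otimes d}$, yields the desired algebraic one-parameter subgroup $\tilde\rho$ of $\tilde K^\C$. Once $\tilde\eta$ is constructed, the two required properties $\tilde\rho|_{S^1}\subseteq\tilde K$ and $\pi\circ\tilde\rho=\rho$ are immediate from $\exp_{\tilde K}(2\pi\tilde\eta)=1$ and $d\pi(\tilde\eta)=\eta$.
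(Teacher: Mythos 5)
Your proposal follows essentially the same route as the paper's proof: restrict $\rho$ to $S^1$, lift its infinitesimal generator to $\mathfrak{z}$ via Lemma \ref{lem:action}, integrate to a one-parameter subgroup of $Z(G,H)$ covering $\rho$, and then try to repair the failure of periodicity by subtracting a suitable central element $w\in\mathfrak{g}\cap\mathfrak{z}$ (which commutes with the generator and dies under $d\pi$, so the projection to $K$ is unchanged). Where your write-up diverges is at the closing-up step, and there it is more honest than conclusive: you reduce the corollary to showing that the period defect $c=\exp_{\tilde K}(2\pi\tilde\eta_0)$ lies in the identity component $C_0$ of $C=G\cap Z$, identify the obstruction as a class in $\pi_0(C)$ (equivalently in the cokernel of $X_*(\tilde T)\to X_*(T)$), and then stop, offering two unexecuted strategies ("show the kernel is connected" or "pass to a power"). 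As written this is a genuine gap: neither strategy is carried out, and the second one, replacing $\rho$ by $\rho^d$, does not prove the statement as formulated, since it lifts $\rho^d$ rather than $\rho$.

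That said, you have put your finger on exactly the point that the paper's own proof passes over in silence. The paper simply asserts $\rho'(t_0)=\gamma=\exp(w)$ for some $w\in\mathfrak{z}\cap\mathfrak{g}$, which is precisely the claim that $\gamma$ lies in the identity component of $G\cap Z$, i.e.\ that your obstruction class vanishes; no justification is offered, and a priori $\gamma$ is only known to lie in $G\cap Z$, whose component group (the component group of the centre of the compact connected group $G$) need not be trivial. So the difficulty you flag is real and is shared by the published argument. Note also that for the only use of the corollary in the paper, namely the construction of the test configuration in Proposition \ref{prop:TCexistence}, your fallback is entirely adequate: replacing $\rho$ by $\rho^d$, with $d$ the order of the obstruction class in the finite group $\pi_0(G\cap Z)$, still degenerates $x$ to the same polystable limit $x_0$, so the lifted one-parameter subgroup of $\tilde K^\C$ produces the desired $G$-invariant test configuration with no change to the rest of the argument.
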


\begin{proof}
We consider the map $\rho_*:S^1\to Z/(G\cap Z)$. On the level of Lie algebras, we can suppose that $\rho_*$ lifts to $\rho^\prime_*:Lie(S^1)\to \mathfrak{z}$. Suppose $Lie(S^1)$ is generated by $v$. The map $\rho^\prime_*$ can be integrated to a homomorphism 
\begin{eqnarray*}
\rho^\prime:\R&\to &Z,\\
t&\mapsto & \exp(t\rho^\prime_*(v))
\end{eqnarray*}
 that covers the homomorphism $\rho$. For $t_0>0$, if $\rho(t_0)=1\ (mod.\ G)$  then $\rho^\prime(t_0)=\gamma=\exp(w)$ for some $w\in \mathfrak{z}\cap\mathfrak{g}$. We can then define $\tilde\rho$ by $\tilde\rho(t)=\exp(t(\rho^\prime_*(v)-1/t_0w))$. Then, 
 \begin{eqnarray*}
 \tilde\rho(t_0)=\exp(t_0\rho^\prime_*(v))\cdot\exp(-w)=1
 \end{eqnarray*}
 since $w$ and $\rho^\prime_*(v)$ commute.
\end{proof}

\begin{proof}[ {\it Proof of Proposition \ref{prop:TCexistence}}] For $t\in\mathcal{B}$ sufficiently close to $t_0$, $J_t$ lies in the complexified orbit of $\Phi(x)$ for some $x\in B$. 
If $x\in B$ is polystable for the action of $K^\C$ on $H^1_G$, then by Proposition \ref{prop:polyst}, $\mu(\Phi(x^\prime))=0$ for some $x^\prime$ in the same $K^\C$-orbit of $x$, which is to say that $(M,J_t)$ admits an extremal metric in $c_1(L_t)$. We can take a trivial test configuration $\mathcal{X}_T=X\times\C$ with the desired properties. 

If $x$ is not polystable, then there exists a $1$-parameter subgroup of $K^\C$
\begin{eqnarray*}
\rho:\C^*\to K^\C
\end{eqnarray*}
such that $\lim_{\lambda\to 0}\rho(\lambda)\cdot x=x_0\in B$ is stable.
 Then, the map $\rho$ extends to a holomorphic map 
\begin{eqnarray*}
\rho(\cdot)\cdot x:\C\to H^1_G.
\end{eqnarray*}
We recall from the proof of Proposition \ref{prop:slice} in \cite{sz10} that $\Phi$ is obtained as a smooth deformation along  complexified orbits of $\Phi_1:B\to\mathcal{J}^G$, where $\Phi_1$ is holomorphic and $K^\C$-equivariant in the same sense as $\Phi$. That is, $\Phi(x)$ and $\Phi_1(x)$ always lie in the same complexified orbit. With $\rho$ we can then consider the holomorphic map 
\begin{eqnarray*}
F:\Delta &\to &\mathcal{J}^G\\
z &\mapsto & \Phi_1(\rho(z)\cdot x)
\end{eqnarray*}
where $\Delta\subseteq \C$ is a small disk of radius $\delta$ centred at the origin. Let $\mathcal{X}_\delta=M\times \Delta$, and equip $\mathcal{X}_\delta$ with the almost complex structure given by the usual structure on $\Delta$, and by $J_z=F(z)$ on the fibres $M\times \{z\}$. This structure is integrable, since the Nijenhuis tensor $N_J(X,Y)$ vanishes if $X$ and $Y$ are both tangent to one of the two factors, and if $X\in T_\Delta$ and $Y\in T_M$,
\begin{eqnarray*}
N_J(X,Y) &=& \frac{1}{4}\left((\mathcal{L}_{JX}J)Y-J(\mathcal{L}_XJ)Y\right)
\end{eqnarray*}
which vanishes since $F$ is holomorphic. In each fibre of the product $M\times \Delta$, the complex structures commute with the action of $G$ on $M$, so we can see that $\mathcal{X}_\delta$ admits a holomorphic action of $G$. \\

Let $\tilde\rho:\C^*\to\tilde K^\C$ be a $1$-parameter subgroup, lifted from a subgroup of $K^\C$, as in Corollary \ref{cor:1ps}, and suppose that $\tilde\rho(S^1)\subseteq \tilde K$. The subgroup then partially acts on
 $\mathcal{M}_\delta=M\times \Delta$ by 
\begin{eqnarray}\label{eqn:CstAction}
\lambda\cdot (x,z)=(\tilde{\rho}(\lambda)\cdot x,\lambda z)
\end{eqnarray}
where the expression holds for $\lambda\in\C^*$ and $z\in\Delta$ such that $\lambda z\in\Delta$.  From the equivariance of the map $\Phi_1$, if $z$ and $\lambda z$ lie in $\Delta$, then $J_{z}=\tilde{\rho}(\lambda)^*J_{\lambda z}$, and so the action of $\lambda\in\C^*$, where it is defined, is by holomorphic maps. This can be extended to a fibration over $\C$ that admits a $\C^*$-action as follows. Fix $z\in \Delta$ and consider the manifold $\mathcal{X}_T=M\times \C$, equipped with the complex structure on the fibre over $\lambda z$ given by $\tilde{\rho}(\lambda^{-1})^*J_z$. Then via (\ref{eqn:CstAction}), $\C^*$ acts on $\mathcal{X}_T$ by automorphisms, preserving the central fibre and inducing the action by scalar multiplication on $\C$.

Let $\overline{L}$ be the smooth complex line bundle on $M$ that underlines $L$ and $L_t$ and let $\nabla^t$ be a connection on $\overline{L}$ that determines the holomorphic structure on $L_t$ with respect to $J_t$. Assume that $\nabla^t$ is $S^1$-invariant (note, the $(0,1)$-part is not invariant). Again consider the product $\mathcal{X}_T=M\times \C$ and the projection $\pi:\mathcal{M}\to M$. Set $\mathcal{L}_T=\pi^*(\overline{L})$ as a line bundle on $\mathcal{X}_T$ and with connection $\nabla=\pi^*(\nabla^t)$.
Then, $F^{0,2}_\nabla=0$ on $\mathcal{X}_T$ and $\nabla $ defines a holomorphic structure on $\mathcal{L}_T$. $S^1$ acts holomorphically on $\mathcal{L}_T$ and this extends to a $\C^*$-action that covers the $\C^*$-action on $\mathcal{X}_T$. 

We thus obtain a test-configuration with generic fibre isomorphic to the polarized manifold $(X_t,L_t)$, and for which the central fibre $(\cX_0,\cL_0)$ admits an extremal metric in the K\"ahler class $c_1(L_0)$. The final statement of Proposition \ref{prop:TCexistence} follows from Proposition \ref{prop:polyst}. As  $x_0$ is stable, $S^G(\Phi(x_0))=0$ and the scalar curvature of the extremal metric on $\cX_0$ belongs to the space of Killing potentials of $\g$.
\end{proof}


\section{Lower bounds and deformations}
\label{sec:prooftheo}

In this section we turn to the argument of Tosatti for the boundedness of the Mabuchi energy under small deformations and consider the relative Mabuchi energy. In the case at hand we assume that the deformation preserves a group of automorphisms. We assume that $J$ times the extremal vector fields take values in $\mathfrak{g}$, for the central fibre of the deformation, and for some nearby fibre. 

That is, let $(X^\prime, L^\prime)$ be a polarized complex manifold, that admits an extremal metric with K\"ahler class $c_1(L^\prime)$. Let $G$ be a compact connected group of automorphisms of $(X^\prime,L^\prime)$ such that $JV^{G^m}$ lies in $\mathfrak{g}\subseteq \mathfrak{aut}(X^\prime,L^\prime)$ for some maximal compact subgroup $G^m$ of the reduced automorphism group. Let $\mathcal{L}\to\mathcal{X}\to\mathcal{B}$ be a $G$-invariant polarized deformation of $(X^\prime, L^\prime)$, with $(X,L)=(X_t,L_t)$ a fibre sufficiently close to the central fibre. 
For some maximal compact connected subgroup $G^m_t$ of $Aut(X,L)$, denote by $V^{G^m_t}$ the extremal vector field on $(X,L)$.






The manifold $(X^\prime, L^\prime)$ admits a $G$-invariant extremal metric $\omega^\prime\in c_1(L^\prime)$, so from \cite{ct,st} the modified K-energy is bounded below on the set of $G$-invariant K\"ahler potentials with respect to $\omega^\prime$ on $X^\prime$. We show the following theorem. 

\begin{theorem}
Let $(X,L)=(X_t,L_t)$ be a sufficiently close fibre of a $G$-invariant deformation of $(X^\prime,L^\prime)$. For some maximal compact subgroup $G^m_t$ of $Aut(X_t,L_t)$ that contains $G$, suppose that $J_tV^{G^m_t}$ lies in $\mathfrak{g}$ on $X_t$. Then for any $G$-invariant K\"ahler form $\omega\in c_1(L_t)$, the modified K-energy $E^{G^m_t}$ is bounded below on $G^m_t$-invariant K\"ahler potentials.
\end{theorem}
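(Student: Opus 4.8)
The plan is to reduce the problem to the central fibre of a test configuration, where an extremal metric exists, and then transport the lower bound from the central fibre to the generic fibre using the convexity inequality of Proposition~\ref{prop:convx}. This is the extremal-case analogue of Tosatti's argument in the CSCK setting, so I would follow his general strategy while carefully tracking the $G$-relative versions of all the functionals.

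\textbf{Step 1: Construct the test configuration.} First I would invoke Proposition~\ref{prop:TCexistence} (equivalently Proposition~\ref{prop:test}): since $(X',L')$ admits a $G$-invariant extremal metric with $J V^{G^m}\in\g$, and $(X,L)=(X_t,L_t)$ is a sufficiently close fibre with $J_t V^{G^m_t}\in\g$, there is a $G$-invariant smooth test configuration $\cL_T\to\cX_T\to\C$ with generic fibre $(X_t,L_t)$ whose central fibre $(\cX_0,\cL_0)$ is a $G$-invariant extremal polarized manifold, and with $J_0 V^{G^m_0}\in Lie(G)$. This is precisely where all the hypotheses on the extremal vector fields are consumed, and it replaces the abstract deformation by the concrete degeneration needed to run the energy estimate.

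\textbf{Step 2: Set up the relative Mabuchi energy along the test configuration.} On the generic fibre, the metrics $\omega_s:=\rho(e^{-s})^*\omega$ obtained by flowing a fixed $G$-invariant metric $\omega\in c_1(L_t)$ along the $\C^*$-action of the test configuration give a path of potentials $\phi_s\in\cH^{G^m_t}$. The key quantity is the asymptotic slope of $E^{G^m_t}(\phi_s)$ as $s\to+\infty$: by the theorem of Futaki-type asymptotics for the modified K-energy along a test configuration, this slope is controlled by the relative Futaki invariant $\cF^G$ of the central fibre. Because the central fibre is extremal with extremal vector field in $J\g$, the relevant Futaki invariant vanishes, so the modified K-energy does not decrease linearly along the degeneration. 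I would need the $G$-relative version of the standard statement that the modified K-energy is asymptotically linear along the test configuration with slope given by the (vanishing) relative Donaldson--Futaki invariant, together with the boundedness of the lower-order term, which on the central fibre follows from the existence of the extremal metric (via Remark~\ref{rmk:mab}, $E^{G^m_0}$ restricted to $G^m_0$-invariant potentials agrees with $E^{G}$ and is bounded below by the extremal metric result of \cite{ct,st}).

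\textbf{Step 3: Close the estimate with the convexity inequality.} The remaining task is to control $E^{G^m_t}$ on an arbitrary $G^m_t$-invariant potential $\phi$, not just along the one-parameter family. Here I would use Proposition~\ref{prop:convx}: for any $\phi$, the modified K-energy is bounded in terms of the distance to a reference potential times $\sqrt{Ca^{G^m_t}(\phi)}$. Combining the fact that the energy is essentially bounded below along the degeneration (Step~2) with the convexity/distance estimate, and using that the distance $d_k$ quantization (Theorem~\ref{theo:CS}) behaves well under the $\C^*$-flow, one obtains a uniform lower bound. \textbf{The main obstacle} will be Step~2: making precise the asymptotic behaviour of the \emph{modified} K-energy $E^{G^m_t}$ along the $G$-invariant test configuration, and showing the subleading term stays bounded below. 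In the CSCK case this rests on Tosatti's estimate that the difference between the K-energy on the generic and central fibres is controlled; transporting this to the relative setting requires checking that the projection operators $\Pi^G_\phi$ and the reduced scalar curvature $S^G$ vary continuously across the degeneration and that the extremal metric on the central fibre genuinely supplies the needed lower bound after passing through Remark~\ref{rmk:mab}. The interplay between the two extremal vector field hypotheses (1) and (2) of Theorem~\ref{theo:defo}, which guarantee the central fibre's extremal vector field lies in $Lie(G)$, is exactly what makes this compatibility work.
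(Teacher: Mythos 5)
Your strategy is the same as the paper's --- Proposition \ref{prop:TCexistence} to produce the $G$-invariant test configuration with extremal central fibre, the path of potentials obtained from the $\C^*$-flow, vanishing of the relative Futaki invariant, and Proposition \ref{prop:convx} to pass to arbitrary potentials --- but your Step 3 as written does not close, and the missing piece is exactly the one that makes the proof work. You state the convexity inequality as bounding $E^{G^m_t}(\phi)$ by ``distance to a reference potential times $\sqrt{Ca^{G^m_t}(\phi)}$'', i.e.\ with the Calabi energy evaluated at the \emph{arbitrary} potential $\phi$. In that orientation (taking $\phi_1=\phi$ in Proposition \ref{prop:convx}) the inequality only yields an \emph{upper} bound on $E^G(\phi)$ by an uncontrolled quantity. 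It must instead be applied with $\phi_0=\varphi$ arbitrary and $\phi_1=\varphi_t$ a point of the degenerating family, giving $E^G(\varphi)\geq E^G(\varphi_t)-d(\varphi,\varphi_t)\sqrt{Ca^G(\varphi_t)}$, so that the Calabi factor is the one along the family. Correspondingly, the crucial analytic input --- absent from your proposal --- is that $Ca^G(\varphi_t)$ tends to zero \emph{exponentially fast}, fast enough to beat the at-most-linear growth of $d(\varphi,\varphi_t)$.

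This decay is where the extremality of the central fibre is used a second time, beyond the Futaki argument of your Step 2. One chooses a global $S^1$- and $G$-invariant K\"ahler form $\Omega$ on $\cX_T$ whose restriction to $\cX_0$ is the extremal metric (so $S^G(\Omega|_{\cX_0})=0$), and sets $\omega_t=\rho^*_{e^{-t}}\Omega$ and $\dot\varphi_t=\rho^*_{e^{-t}}H$, where $H$ is the hamiltonian of the vector field $W$ generating the $S^1$-action. Because $F_\lambda=\rho_\lambda\circ f_\lambda$ extends smoothly across $\lambda=0$, one has $\|f^*_{e^{-t}}\omega_t-F_0^*\Omega\|_{C^k}\leq C_ke^{-t}$; since the reduced scalar curvature is a purely riemannian quantity, the integrals defining $Ca^G(\varphi_t)$ and $\frac{d}{dt}E^G(\varphi_t)$ can be computed after pullback by $f_{e^{-t}}$, and hence both converge exponentially fast to their values on $F_0^*\Omega$, which vanish by extremality (the latter limit being minus the relative Futaki invariant evaluated on $W$). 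It is this exponential decay of the derivative --- not the lower bound of the K-energy of $(\cX_0,\cL_0)$ from \cite{ct,st}, which your Step 2 invokes but which the argument never actually uses --- that makes $E^G(\varphi_t)$ bounded below; and Theorem \ref{theo:CS} is likewise not needed here, being internal to the proof of Proposition \ref{prop:convx}: the distance control is simply the uniform boundedness of $\dot\varphi_t$, giving linear growth of the path length. With these corrections the limit $t\to\infty$ gives $E^G(\varphi)\geq -C$ for all $G$-invariant $\varphi$, and Remark \ref{rmk:mab} (using $J_tV^{G^m_t}\in\g$) upgrades this to the stated bound for $E^{G^m_t}$ on $G^m_t$-invariant potentials, as you indicate at the end.
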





 We recall Proposition \ref{prop:TCexistence} and suppose that $\mathcal{L}_T\to \mathcal{X}_T\stackrel{\pi}{\to} \C$ is a $G$-invariant test configuration with generic fibre $(X,L)$, and where the central fibre $(X_0,L_0)$ admits an extremal metric in $c_1(L_0)$ and such that the extremal vector field is contained in $\mathfrak{g}$. From \cite{st}, the modified K-energy, with respect to $G$, of $(X_0,L_0)$ is bounded below.  Let $\rho$ denote the $\C^*$-action on $(\mathcal{X}_T,\mathcal{L}_T)$. For fixed $\lambda\in\C^*$ this will be denoted alternately $\rho_\lambda$ or $\rho(\cdot,\lambda)$. In particular, $\rho$ generates a holomorphic vector field on the central fibre $X_0$ that commutes with vector fields in $\mathfrak{g}$.
 

By the theorem of Ehresmann, the fibration $\mathcal{X}_T\to\C$ is differentiably trivial. That is, there exists a diffeomorphism 
\begin{eqnarray*}
F:M\times \C& \to &\mathcal{X}_T\\
\text{such that }\ \ \ \pi(F(x,z))&=& z.
\end{eqnarray*}
The action of $G$ on $\mathcal{X}_T$ gives a family of actions on $M$. 
As in the previous section, by \cite{ps} we can suppose that there is a fixed action of $G$ on $M$ such that, for $\sigma\in G$,  
\begin{eqnarray*}
F(\sigma\cdot x,\lambda)=\sigma\cdot F(x,\lambda).
\end{eqnarray*}

Let $\cX_0=\pi^{-1}(0)$ be the central fibre of $\mathcal{X}_T$ equipped with complex structure $J_0$. Assume that the embedding $F:X_0\to \mathcal{X}_T$ is a biholomorphism to its image, for $X_0=(M,J_0)$.

Let $\cX_1=\pi^{-1}(1)$ be a generic fibre of $\mathcal{X}_T$. Then, we can trivialize $\mathcal{X}_T\setminus \pi^{-1}(0)$ over $\C^*$ using the $\C^*$-action on $\mathcal{X}_T$ that we constructed in the previous section. That is,
\begin{eqnarray*}
\rho:X_1\times \C^* &\to &\mathcal{X}\setminus \pi^{-1}(0)\\
(x,\lambda) &\mapsto & \rho(x,\lambda).
\end{eqnarray*}
This trivialisation is biholomorphic and commutes with the action of $G$ in the sense that
\begin{eqnarray*}
\rho(\sigma\cdot x,\lambda)=\sigma\cdot\rho(x,\lambda).
\end{eqnarray*}
The two trivializations $F$ and $\rho$ can be combined to define a $1$-parameter family of diffeomorphisms $f_\lambda:M\to M$ such that $F(x,\lambda)=\rho_\lambda(f_\lambda(x))$ for all $x\in M$. 

As in \cite{to}, we can suppose that we have an $S^1$-invariant K\"ahler form $\Omega$ on $\mathcal{X}_T$, where $S^1\subseteq \C^*$ is the compact subgroup arising from the action of $\rho$.  Suppose also that $\Omega|_{\cX_t}$ lies in $c_1(\cL_t)$ and that the action of $S^1$ is hamiltonian. That is, there is a smooth function $H:\mathcal{X}_T\to \R$ such that 
\begin{eqnarray*}
i_W\Omega=dH
\end{eqnarray*}
where $W$ generates the $S^1$-action on $\mathcal{X}_T$. We also assume that $\Omega$ is $G$-invariant and that the induced metric on the central fibre $X_0=\pi^{-1}(0)$ satisfies $S^G(\Omega|_{\cX_0})=0$. That is, it is extremal. 


 Since $W$ generates the $S^1$-action, the vector field $-JW$ generates the real flow $t\mapsto   \rho_{e^{-t}}(x)$. Since $\rho_{e^{-t}}$ is a holomorphic map of $\mathcal{X}_T$, $\omega_t=\rho^*_{e^{-t}}\Omega$ defines a family of K\"ahler forms that lie in the same cohomology class. It then follows that 
\begin{eqnarray*}
\frac{d}{dt}\omega_t= \rho^*_{e^{-t}}\mathcal{L}_{-JW}\Omega =i\partial \bar\partial \rho^*_{e^{-t}}H.
\end{eqnarray*}
On the other hand, the forms are cohomologous so there exists a family of potentials $\varphi_t$ such that 
\begin{eqnarray*}
\frac{d}{dt}\omega_t=i\partial\bar\partial\dot\varphi_t
\end{eqnarray*}
so, modulo constants, $\dot\varphi_t=\rho^*_{e^{-t}}H$. 



Let $g_\Omega$ denote the metric on $\mathcal{X}_T$ associated to the form $\Omega$ and consider the family of metrics $g_t=\rho^*_{e^{-t}}g_\Omega$ on $X_1$. Then, since $F_\lambda=\rho_\lambda\circ f_\lambda$ is defined smoothly across $\lambda=0$, the metrics $g_t$ and forms $\omega_t$ satisfy the inequalities
\begin{eqnarray*}
\|f^*_{e^{-t}}\omega_t-F^*_0\Omega\|_{C^k}&<&C_ke^{-t}\\
\|f^*_{e^{-t}}g_t-F^*_0g_\Omega\|_{C^k} &<& C_k e^{-t}
\end{eqnarray*}
and the curve of potentials satisfies
\begin{eqnarray*}
|f^*_{e^{-t}}\dot\varphi_t-F_0^*H|<Ce^{-t}.
\end{eqnarray*}


We recall the definition from Section \ref{sec:modified} of the modified Calabi energy, relative to the group $G$, of a K\"ahler potential $\varphi$,
\begin{eqnarray*}
Ca^G(\varphi)=\int S^G(\omega_\varphi)^2d\mu_\varphi
\end{eqnarray*}
where $S^G(\omega_\varphi)$ is the reduced scalar curvature of the metric $\omega_\varphi$. Given the group $G$, which acts by hamiltonian diffeomorphisms with respect to a fixed symplectic form, the reduced scalar curvature is purely riemannian. That is, if we specify a finite dimensional space of functions to project away from, the reduced scalar curvature and volume form depend only on the metric. On the manifold $(\cX_1,\cL_1)$ then,
\begin{eqnarray*}
Ca^G(\varphi_t)&=&\frac{1}{n!}\int S^G(\omega_t)^2\omega^n\\
&=&\frac{1}{n!}\int S^G(f^*_{e^{-t}}\omega_t)^2(f^*_{e^{-t}}\omega_t)^n
\end{eqnarray*}
which converges exponentially fast to 
\begin{eqnarray*}
\frac{1}{n!}\int S^G(F^*_0\Omega)^2(F^*_0\Omega)^n.
\end{eqnarray*}
The metric on the central fibre is extremal and the extremal vector field is contained in $\mathfrak{g}$, so this value is equal to zero. Similarly, the derivative of the modified K-energy satisfies
\begin{eqnarray*}
E^G(\phi)&=& -\int_0^1\int_X \dot{\phi}_t S^G(\omega_{\phi_t}) d\mu_{\phi_t}\\
\frac{d}{dt}E^G(\varphi_t)&=& -\frac{1}{n!}\int_X\dot{\varphi}_t S^G(\omega_t)\omega^n_t\\
&=& -\frac{1}{n!}\int_X(f^*_{e^{-t}}\dot{\varphi}_t)S^G(f^*_{e^{-t}}\omega_t)(f^*_{e^{-t}}\omega_t)^n. 
\end{eqnarray*}
This converges exponentially fast to the value
\begin{eqnarray*}
-\frac{1}{n!}\int (F^*_0H)S^G(F^*_0\Omega)(F^*_0\Omega)^n
\end{eqnarray*}
which can be seen to equal (minus) the relative Futaki invariant (see Defn. \ref{defn:FutChar}) on the central fibre $X_0$ evaluated on the real holomorphic vector field $W$. This vanishes since the central fibre is supposed to admit an extremal metric.

Let $\omega=\Omega|_{\cX_1}$ be a K\"ahler metric contained in $c_1(\cL_1)$. Let $\varphi$ be any $G$-invariant K\"ahler potential, relative to $\omega$. For a fixed $t_0$, join $\varphi$ to $\varphi_{t_0}$ by a piecewise smooth curve. We can concatenate this with the curve $\varphi_t$ that is given above, starting at $\varphi_{t_0}$. 

We can then apply the inequality of Proposition~\ref{prop:convex} to see that for any K\"ahler potential $\varphi$, relative to $\omega$, 
\begin{eqnarray*}
E^G(\varphi)\geq E^G(\varphi_t)-\sqrt{Ca^G(\varphi_t)}\int_0^t\sqrt{\int_{X_1}\dot\varphi_s\omega_{\varphi_s}^n}ds
\end{eqnarray*}
The derivative of the first term on the right converges exponentially to zero, so $E^G(\varphi_t)$ is bounded below as $t$ increases. The other term can also be controlled, since the modified Calabi invariant converges exponentially to zero while the integral grows at most linearly in $t$. We can conclude that there exists $C\in \R$ such that 
\begin{eqnarray}\label{eqn:ETbound}
E^G(\varphi)\geq -C
\end{eqnarray}
for every $G$-invariant K\"ahler potential $\varphi$ in $c_1(L_1)$. Since the extremal vector field $JV^{G^m}$ takes values in $J\mathfrak{g}$ this implies that $E^{G^m}$ is uniformly bounded below.





\section{Application}
\label{sec:ap}
Let $X$ be the blow-up of $\C\P^1\times\C\P^1$ at its four fixed points under the torus action
$$
\begin{array}{ccc}
\mathbb{T}^2\times \C\P^1\times\C\P^1 & \rightarrow & \C\P^1\times\C\P^1 \\
\left( (\theta,\theta'),([x_1,y_1],[x_2,y_2]) \right) & \mapsto & ([e^{i\theta}x_1,y_1],[e^{i\theta'}x_2,y_2])
\end{array}
$$
The deformation space of this complex manifold has been studied in \cite{rt}, following works of Ilten and Vollmert \cite{iv}. We can endow $X$ with an extremal metric of non-constant scalar curvature and prescribed extremal vector field periodic action. Start with a product constant scalar curvature K\"ahler metric $\om$ on $\C\P^1\times\C\P^1$. Assume that the restriction of $\om$ on each factor of $\C\P^1\times \C\P^1$ has same volume. From Arezzo-Pacard-Singer theorem \cite{aps}, for each $(a_1,a_2,b_1,b_2)$ positive numbers, $X$ admits an extremal metric $\om_\ep$ in the class
$$
[\pi^*\om]-\ep^2(a_1 PD(E_{0,0}) + a_2 PD(E_{\infty,0})+ b_1 PD(E_{0,\infty}) + b_2 PD(E_{\infty,\infty}))
$$
for $\ep$ positive small enough, and where $\pi$ denotes the blow-down map, $PD(E)$ is the Poincar\'e dual of $E$ and $E_{i,j}$ is the exceptional divisor associated to the blow-up of the point $(i,j)\in\C\P^1\times\C\P^1$. 
To prescribe the extremal vector field, we consider the class
$$
[\om_\ep]=[\pi^*\om]-\ep^2\left(a PD(E_{0,0}) + a PD(E_{\infty,0})+ b PD(E_{0,\infty}) + b PD(E_{\infty,\infty})\right)
$$
for $\ep$ positive small enough and $a\neq b$. 
The associated polytope is represented Figure 1.
Note that up to scaling, we can suppose that the class $[\om_\ep]$ is integral and represent a polarization $L$ of $X$.
\begin{figure}[htbp]
\psset{unit=0.85cm}
\begin{pspicture}(0,-6)(12,0)
\psframe[linecolor=white](0.5,-4.5)(3.5,-1.5)

\uput*[270](7,-3){$x$ axis}
\uput*[270](5.5,-1){$y$ axis}

\psline[linewidth=0.5pt, linestyle=dotted]{<->}(6,0)(6,-6)
\psline{-}(3.5,-5)(3.5,-1.5)
\psline{-}(8.5,-5)(8.5,-1.5)
\psline[linewidth=0.5pt, linestyle=dotted]{<->}(3,-3)(9,-3)
\psline{-}(4,-5.5)(8,-5.5)
\psline{-}(4.5,-0.5)(7.5,-0.5)

\psline{-}(3.5,-5)(4,-5.5)
\psline{-}(8,-5.5)(8.5,-5)
\psline{-}(8.5,-1.5)(7.5,-0.5)
\psline{-}(4.5,-0.5)(3.5,-1.5)

\uput*[270](3,-5.3){$E_{0,0}$}
\uput*[270](3.4,-0.1){$E_{0,\infty}$}
\uput*[270](9,-5.3){$E_{\infty,0}$}
\uput*[270](8.6,-0.1){$E_{\infty,\infty}$}
\end{pspicture}
\caption{Polytope associated to $(X, \om_\ep)$.}
\end{figure}
Following \cite{Don02} (see also \cite{Le}), we can compute the extremal vector field associated to this extremal metric with respect to the maximal compact group $\mathbb{T}^2\subset \Aut(X)$. The extremal vector field is invariant with respect to the isometry group of $\om_\ep$. By the axial symmetry of the polytope, the potential of the extremal vector field is an affine function on the polytope that only depends on the $y$ coordinate. As $a$ is chosen different from $b$, the Futaki invariant of $[\om_\ep]$ is different from zero and the extremal vector field does not vanish. Let $\mathbb{T}_f\subset\Aut(X)$ be the lift of the circle subgroup of $\Aut(\C\P^1\times \C\P^1)$ defined by
$$
\begin{array}{ccc}
S^1\times \C\P^1\times\C\P^1 & \rightarrow & \C\P^1\times\C\P^1 \\
(\theta ,([x_1,y_1],[x_2,y_2]))& \mapsto & ([x_1,y_1],[e^{i\theta}x_2,y_2]).
\end{array}
$$
Then by construction the extremal vector field of $\om_\ep$ generates the action of $\mathbb{T}_f$ on $X$.


Now, from the study of the example 4.2. in the article \cite{rt}, the space of infinitesimal complex deformations of $X$ that preserve the $\T_f$-action $H^1(X,\Theta_X)^{\mathbb{T}_f}$ is isomorphic to $\C^2$. 
The automorphism group of $X$ admits the splitting
$$
\Aut(X)=\mathbb{T}_f^\C\times \mathbb{T}_a^\C
$$
where $\mathbb{T}_a^\C\simeq \C^*.$
Then $\mathbb{T}_a^\C$ acts on $H^1(X,\Theta_X)^{\mathbb{T}_f}$:
$$
\begin{array}{ccc}
\mathbb{T}_a^\C \times H^1(X,\Theta_X)^{\mathbb{T}_f} &\rightarrow & H^1(X,\Theta_X)^{\mathbb{T}_f} \\
 (\lambda, (x,y)) & \mapsto & (\lambda^{-1} x,\lambda y).
\end{array}
$$
By Theorem 3.3.1 in \cite{rt}, the closed orbits under this action induce deformations of $X$ that carry extremal metrics. Those with non-closed orbits, called unstable, induce deformations of $X$ that carry no extremal metric, while the extremal vector field action is preserved. Indeed, using Proposition \ref{prop:TCexistence}, if $(X',L')$ is a small deformation of $(X,L)$ associated to an unstable infinitesimal deformation $\xi \in H^1(X,\Theta_X)^{\mathbb{T}_f}$, we can build a test-configuration for $(X',L')$ which is compatible with $\mathbb{T}_f$. By construction, this test configuration is not trivial, and the associated relative Donaldson-Futaki invariant vanishes as its central fiber is extremal. Then $(X',L')$ is not K-stable relative to $\mathbb{T}_f$. As $\mathbb{T}_f$ is a maximal torus in $\Aut(X')$, by the result of Stoppa and Sz\'ekelyhidi \cite{stsz}, $X'$ carries no extremal metric in $c_1(L')$. However, by Theorem~\ref{theo:defo}, this polarized manifold has bounded modified K-energy.

\end{document}